\titleformat*{\section}{\large\bfseries}
\newtheorem{theorem}{Theorem}[section]
\newtheorem{lemma}[theorem]{Lemma}
\newtheorem{corollary}[theorem]{Corollary}
\newtheorem{definition}[theorem]{Definition}
\newtheorem{example}[theorem]{Example}
\newtheorem{remark}[theorem]{Remark}
\numberwithin{equation}{section}
\DeclareMathOperator{\taninv}{tan^{-1}}
\title{Common Fixed Point results in Complex valued metric spaces via simulation functions}
\author{\large Anuradha Gupta$^1$ and Manu Rohilla$^2$\\ $^1$ {\small Department of Mathematics, Delhi College of Arts and Commerce,}\\ {\small University of Delhi, Netaji Nagar, New Delhi-110023, India.}\\{\small E-mail:dishna2@yahoo.in}\\{\small $^2$Department of Mathematics, University of Delhi, New Delhi-110007, India.}\\{\small E-mail:manurohilla25994@gmail.com}}
\date{}
\begin{document}
\maketitle

\begin{abstract}
In this paper, the notion of $\mathbb{C}$-simulation function is introduced and the existence and uniqueness of common fixed points of two self-mappings satisfying contractive conditions in the setting of complex valued metric spaces  via $\mathbb{C}$-simulation functions  are studied. Examples are also provided to demonstrate the results. The existence and uniqueness of a first-order periodic differential equation is also obtained as an application of the result.\\
\textbf{Mathematics Subject Classification:} Primary 54H25; Secondary 47H10, 54C30\\
\textbf{Keywords:} Complex valued metric space, $\mathbb{C}$-simulation function, $\alpha_{\mathbb{C}}$-admissible mapping, common fixed point
\end{abstract}

\section{Introduction and Preliminaries}
In 2011, Azam et.al \cite{2} introduced the concept of complex valued metric spaces which is a generalization of metric spaces and established common fixed point of self-mappings satisfying generalized contractive condition. Many authors (see \cite{2,7,9,10,11}) have obtained fixed point results in complex valued metric spaces and proved the existence and uniqueness of solution of nonlinear integral equations.

Let $\mathbb{N}$, $\mathbb{R}$ and $\mathbb{C}$ be the set of natural numbers, real numbers and complex numbers, respectively. For $z_1,z_2 \in \mathbb{C}$, define a partial order $\precsim$ on $\mathbb{C}$ as follows:
$$z_1 \precsim z_2 \quad \mbox{if and only if} \quad \mbox{Re}z_1 \leq \mbox{Re}z_2 \thinspace \thinspace \mbox{and} \thinspace \thinspace \mbox{Im}z_1 \leq \mbox{Im}z_2.$$
In other words, $z_1 \precsim z_2$ if one of the following conditions holds:

(i) Re($z_1)=$ Re($z_2$) and Im($z_1)=$ Im($z_2$),

(ii) Re($z_1)=$ Re($z_2$) and Im($z_1)<$ Im($z_2$),

(iii) Re($z_1)<$ Re($z_2$) and Im($z_1)=$ Im($z_2$),

(iv) Re($z_1)<$ Re($z_2$) and Im($z_1)<$ Im($z_2$).\\
We will write $z_1 \precnsim z_2$ if $z_1 \neq z_2$ and any of (ii), (iii) or (iv) is satisfied. If only (iv) is satisfied, then we write $z_1 \prec z_2$. If $a,b \in \mathbb{R}$ and $a <b$ then $az\precnsim bz$ for all $z \in \mathbb{C}$. It is easily observed that if $0\precsim z_1 \precnsim z_2$ then $\vert z_1 \vert < \vert z_2 \vert$. Also, if $z_1 \precsim z_2$ and $z_2 \prec z_3$ then $z_1 \prec z_3$.

Azam et.al \cite{2} defined complex valued metric as follows:
\begin{definition}
\emph{A complex valued metric on a nonempty set $X$ is a mapping $d:X \times X \rightarrow \mathbb{C}$ such that for all $x,y,z \in X$, the following conditions are satisfied:
\begin{enumerate}[(i)]
\item $0 \precsim d(x,y)$ and $d(x,y)=0$ if and only if $x=y$,
\item $d(x,y)=d(y,x)$
\item $d(x,y) \precsim d(x,z)+d(z,y)$.
\end{enumerate}
The pair $(X,d)$ is called a complex valued metric space.}
\end{definition}
\begin{example}
\emph{(\cite{9}, \cite{10})Let $X= \mathbb{C}$ and $d_j:X \times X \rightarrow \mathbb{C}$, $j=1,2,3$ be defined as
\begin{enumerate}[(i)]
\item $d_1(z_1,z_2)= \vert z_1-z_2\vert$ for all $z_1,z_2 \in \mathbb{X}$.
\item $d_2(z_1,z_2)= e^{ik} \vert z_1-z_2\vert$ for all $z_1,z_2 \in \mathbb{X}$ and $k \in \mathbb{R}$.
\item $d_3(z_1,z_2)= \vert x_1-x_2\vert +i\vert y_1-y_2\vert $ for all $z_1=x_1+iy_1,$ $z_2=x_2+iy_2 \in \mathbb{X}$.
\end{enumerate}
Then $(X,d_j)$ ($j=1,2,3$) is a complex valued metric space.  }
\end{example}
\begin{definition}
\emph{(\cite{2}) Let $(X,d)$ be a complex valued metric space and $A \subset X$. Then\\
(i) A point $a \in A$ is called an interior point of $A$ if there exists $0 \prec r \in \mathbb{C}$ such that $$B(a,r)=\{y \in X : d(a,y) \prec r \} \subset A.$$
(ii) A point $a \in A$ is called a limit point of $A$ if for every $0 \prec r \in \mathbb{C}$ we have $$ B(a,r) \cap (A \setminus \{a\}) \neq \emptyset . $$
(iii) $A$ is called open if each point of $A$ is a limit point of $A$. $A$ is called closed if each limit point of $A$ belongs to $A$.\\
(iv) The family $\mathfrak{B}=\{B(x,r):x \in X, 0\prec r \}$ is a sub-basis for a Hausdorff topology $\tau$ on $X$.}
\end{definition}
\begin{definition}
\emph{(\cite{2}) Let $(X,d)$ be a complex valued metric space. Let $\{x_n\}$ be a sequence in $X$ and $x \in X$. Then}\\
\emph{(i) the sequence $\{x_n\}$} converges \emph{$x$ if for every $0 \prec c \in \mathbb{C}$ there exists $n_0 \in \mathbb{N}$ such that $d(x_n,x) \prec c$ for all $n >n_0$. We denote this by $\lim\limits_{n \rightarrow \infty} x_n =x$.}\\
\emph{(ii) the sequence $\{x_n\}$ is called} Cauchy \emph{in $(X,d)$ if for every $0 \prec c \in \mathbb{C}$ there exists $n_0 \in \mathbb{N}$ such that $d(x_n,x_m) \prec c$ for all $n,m >n_0$.}\\
\emph{(iii) the space $(X,d)$ is said to be} complete \emph{if every Cauchy sequence is convergent.}
\end{definition}
\begin{lemma} \emph{(See \cite{2})}
Let $(X,d)$ be a complex valued metric space and $\{x_n\}$ be a sequence in $X$. Then $\{x_n\}$ converges to $x$ if and only if $\vert d(x_n,x) \vert \rightarrow 0$ as $n \rightarrow \infty$.
\end{lemma}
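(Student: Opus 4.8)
The plan is to unwind both the definition of convergence (Definition 1.4(i)) and the meaning of the strict order $\prec$, and to translate the two scalar inequalities $\text{Re}(d(x_n,x)) < \text{Re}(c)$ and $\text{Im}(d(x_n,x)) < \text{Im}(c)$ into a single statement about the modulus $|d(x_n,x)|$. The only subtlety is the interplay between the partial order on $\mathbb{C}$, which compares real and imaginary parts separately, and the modulus, which aggregates them; the bridge is the pair of elementary inequalities $|\text{Re}(w)| \leq |w|$, $|\text{Im}(w)| \leq |w|$, together with $|w|^2 = \text{Re}(w)^2 + \text{Im}(w)^2$, valid for any $w \in \mathbb{C}$.

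For the forward implication, I would assume $x_n \to x$ and fix $\varepsilon > 0$. I then choose the test element $c = \frac{\varepsilon}{\sqrt 2} + i\frac{\varepsilon}{\sqrt 2}$, which satisfies $0 \prec c$ because both of its parts are strictly positive, and note that $|c| = \varepsilon$. Convergence furnishes an $n_0$ with $d(x_n,x) \prec c$ for all $n > n_0$, that is, $\text{Re}(d(x_n,x)) < \frac{\varepsilon}{\sqrt 2}$ and $\text{Im}(d(x_n,x)) < \frac{\varepsilon}{\sqrt 2}$. Since axiom (i) of the metric gives $0 \precsim d(x_n,x)$, both parts are nonnegative, so squaring and adding yields $|d(x_n,x)|^2 < \frac{\varepsilon^2}{2} + \frac{\varepsilon^2}{2} = \varepsilon^2$, hence $|d(x_n,x)| < \varepsilon$. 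As $\varepsilon$ was arbitrary, $|d(x_n,x)| \to 0$.

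For the converse, I would assume $|d(x_n,x)| \to 0$ and fix an arbitrary $c$ with $0 \prec c$. Here I set $\delta = \min\{\text{Re}(c), \text{Im}(c)\}$, which is strictly positive, and use the hypothesis to obtain an $n_0$ with $|d(x_n,x)| < \delta$ for all $n > n_0$. Because $|\text{Re}(w)| \leq |w|$ and $|\text{Im}(w)| \leq |w|$ for every $w \in \mathbb{C}$, this forces $\text{Re}(d(x_n,x)) < \delta \leq \text{Re}(c)$ and $\text{Im}(d(x_n,x)) < \delta \leq \text{Im}(c)$, which is precisely $d(x_n,x) \prec c$. Since $c$ was arbitrary, $x_n \to x$ by Definition 1.4(i).

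I do not anticipate a genuine obstacle; the argument is elementary once the correct test elements are selected. The one point requiring care is that the definition of convergence quantifies over \emph{all} $c$ with $0 \prec c$ (strict in both coordinates), so in the forward direction a single well-chosen $c$ suffices, whereas in the converse direction the threshold $\delta$ must accommodate an arbitrary such $c$; taking the minimum of the two coordinates of $c$ handles this uniformly.
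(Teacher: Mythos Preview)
The paper does not supply its own proof of this lemma; it merely cites Azam, Fisher and Khan \cite{2} and states the result without argument. So there is nothing in the present paper to compare your proposal against.

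That said, your argument is correct and is essentially the standard one found in \cite{2}. Both directions are handled cleanly: in the forward direction you pick the test element $c=\tfrac{\varepsilon}{\sqrt 2}(1+i)$ so that the two coordinate bounds combine to give $|d(x_n,x)|<\varepsilon$, and in the converse you take $\delta=\min\{\mathrm{Re}(c),\mathrm{Im}(c)\}$ and use $|\mathrm{Re}(w)|,|\mathrm{Im}(w)|\le|w|$. The only cosmetic remark is that the forward direction can be done with any $c=a+ib$ having $a,b>0$ and $|c|\le\varepsilon$; your particular choice is convenient but not essential. No gaps.
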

\begin{lemma} \emph{(See \cite{2})}
Let $(X,d)$ be a complex valued metric space and $\{x_n\}$ be a sequence in $X$. Then $\{x_n\}$ is a Cauchy sequence  if and only if $\vert d(x_n,x_{n+m}) \vert \rightarrow 0$ as $n \rightarrow \infty$.
\end{lemma}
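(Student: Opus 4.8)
The plan is to exploit the bridge between the complex partial order $\precsim$ and the ordinary modulus recorded in the preliminaries, namely that $0 \precsim z_1 \precnsim z_2$ forces $\vert z_1 \vert < \vert z_2 \vert$, together with the elementary bounds $\mathrm{Re}\,z \le \vert z \vert$ and $\mathrm{Im}\,z \le \vert z \vert$ valid for every $z$ with $0 \precsim z$. The argument splits into the two implications of the biconditional, and throughout I read the assertion ``$\vert d(x_n,x_{n+m}) \vert \to 0$ as $n \to \infty$'' in the uniform sense: for each $\varepsilon > 0$ there is $n_0$ with $\vert d(x_n,x_{n+m}) \vert < \varepsilon$ for all $n > n_0$ and all $m \in \mathbb{N}$, which matches the double-index formulation of the Cauchy condition in Definition 1.4(ii).

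For the forward implication, suppose $\{x_n\}$ is Cauchy and fix $\varepsilon > 0$. I would choose the test element $c = \frac{\varepsilon}{\sqrt 2} + i\,\frac{\varepsilon}{\sqrt 2}$, which satisfies $0 \prec c$ and $\vert c \vert = \varepsilon$. By the Cauchy property there is $n_0$ such that $d(x_n,x_{n+m}) \prec c$ for all $n > n_0$ and all $m$. Since also $0 \precsim d(x_n,x_{n+m})$, either $d(x_n,x_{n+m}) = 0$, in which case its modulus is trivially below $\varepsilon$, or $0 \precsim d(x_n,x_{n+m}) \precnsim c$, and then the recorded implication gives $\vert d(x_n,x_{n+m}) \vert < \vert c \vert = \varepsilon$. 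Either way $\vert d(x_n,x_{n+m}) \vert < \varepsilon$ for all $n > n_0$, which is the desired convergence.

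For the converse, assume $\vert d(x_n,x_{n+m}) \vert \to 0$ and fix an arbitrary $0 \prec c \in \mathbb{C}$. Here the key device is to pass from the scalar smallness of the modulus back to the two-coordinate order relation: set $r = \min\{\mathrm{Re}\,c,\ \mathrm{Im}\,c\}$, which is strictly positive because $0 \prec c$ means both coordinates of $c$ are positive. By hypothesis there is $n_0$ with $\vert d(x_n,x_{n+m}) \vert < r$ for all $n > n_0$ and all $m$. For such indices $0 \precsim d(x_n,x_{n+m})$ gives $\mathrm{Re}\,d(x_n,x_{n+m}) \le \vert d(x_n,x_{n+m}) \vert < r \le \mathrm{Re}\,c$ and likewise $\mathrm{Im}\,d(x_n,x_{n+m}) < r \le \mathrm{Im}\,c$, so $d(x_n,x_{n+m}) \prec c$. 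As $c$ was arbitrary, $\{x_n\}$ is Cauchy.

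I expect the only genuine subtlety to lie in the converse direction: a bound on $\vert d \vert$ alone does not return an order relation, and one must manufacture a strictly positive real threshold $r$ from the two coordinates of $c$ so that the componentwise inequalities $\mathrm{Re}\,d < \mathrm{Re}\,c$ and $\mathrm{Im}\,d < \mathrm{Im}\,c$ can be forced simultaneously. The forward direction is routine once the test point $c$ is chosen with modulus exactly $\varepsilon$, and the uniformity in $m$ is carried along unchanged in both directions.
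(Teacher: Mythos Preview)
The paper does not supply its own proof of this lemma; it is quoted from Azam, Fisher and Khan \cite{2} without argument, so there is no in-paper proof to compare against. Your proposal is a correct and self-contained proof: the forward direction correctly manufactures a test point $c$ with $0 \prec c$ and $\vert c\vert = \varepsilon$ so that the recorded implication $0 \precsim z_1 \precnsim z_2 \Rightarrow \vert z_1\vert < \vert z_2\vert$ can be invoked, and the converse correctly passes from a modulus bound back to the componentwise strict inequalities by thresholding at $r = \min\{\mathrm{Re}\,c,\mathrm{Im}\,c\}$. Your explicit reading of the limit as uniform in $m$ is also the right one, and aligns with the double-index Cauchy formulation in Definition~1.4(ii).
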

\begin{definition}
\emph{Let $S$ and $T$ be self-mappings of a nonempty set $X$. A point $x \in X$ is called common fixed point of $S$ and $T$ if $Sx=Tx$.}
\end{definition}
Recently, Khojasteh et.al \cite{5} introduced the notion of simulation functions. Using this concept several fixed point results of the literature have been generalized and unified. Later, this idea have been undertaken by several authors to prove fixed point results in the framework of metric spaces and its various generalizations. Khojasteh et al. defined simulation function as follows:
\begin{definition}
\emph{(\cite{5}) A simulation function is a mapping $\zeta: [0, \infty) \times [0, \infty) \rightarrow \mathbb{R}$ satisfying the following conditions:}

\emph{(i) $\zeta(0,0)=0$,}

\emph{(ii) $\zeta(t,s)<s-t$ for all $t,s>0$,}

\emph{(iii)  if $\{t_n\}$ and $\{s_n\}$ are sequences in $(0,\infty)$ such that $\lim\limits_{n \rightarrow \infty} t_n= \lim\limits_{n \rightarrow \infty} s_n>0$, then  $$\limsup\limits_{n \rightarrow \infty} \zeta(t_n,s_n)<0.$$}
\end{definition}
In this paper, we introduce the concept of $\mathbb{C}$-simulation function. Common fixed point results of self mappings in the context of complete complex valued metric spaces via simulation functions are obtained. Examples are also provided to illustrate the applicability of the results obtained. In the last section, a first-order periodic differential  equation is solved as an application of one of the results of the main section.
\section{Main Results}
In this section, we define $\mathbb{C}$-simulation function and give some examples. Also, we prove some common fixed point results. Let $T:X \rightarrow X$ be a map, denote \begin{align*}
\mbox{Fix}(T)&=\{x \in X:Tx=x\}\\
\mathbb{S}&=\{z \in \mathbb{C}: 0\precsim z\}.
\end{align*}
\begin{definition}\label{definition2.1}
\emph{A mapping $\xi : \mathbb{S} \times \mathbb{S} \rightarrow \mathbb{C}$ is called a $\mathbb{C}$\emph{-simulation function} if it satisfies the following conditions:
\begin{enumerate}[(i)]
\item $\xi (0,0)=0$,
\item $\xi (t,s) \precnsim s-t$ for all $0\precnsim t,s$,
\item  if $\{t_n \}$ and $\{s_n\}$ are sequences in $\mathbb{S} \setminus \{0\}$ such that $0\precnsim \lim\limits_{n \rightarrow \infty} \vert t_n \vert=\lim\limits_{n \rightarrow \infty} \vert s_n \vert$ then $$\limsup\limits_{n \rightarrow \infty} \xi(\vert t_n \vert, \vert s_n \vert )\precnsim 0.$$
\end{enumerate}}
\end{definition}
Throughout this paper, we denote by $\mathscr{Z}_{\mathbb{C}}$ the collection of all $\mathbb{C}$-simulation functions.
\begin{example}
\emph{Let $\xi_j:\mathbb{S} \times \mathbb{S} \rightarrow \mathbb{C}$, $j=1,2,3$ be defined as
\begin{enumerate}[(i)]
\item $\xi_{1}(t,s)= \lambda s-t$ for all $t,s \in \mathbb{S}$, where $0<\lambda<1$.
\item $\xi_{2}(t,s)= \psi(s)-\phi(t)$ for all $t,s \in S$, where $\psi,\phi:\mathbb{S} \rightarrow \mathbb{S}$ are continuous functions satisfying $\psi(t) \precnsim t \precsim \phi(t)$.
\item $\xi_{3}(t,s)=s-t-i \vert t \vert $ for all $t,s \in \mathbb{S}$.
\end{enumerate}
It is easy to see that each $\xi_j$ $(j=1,2,3)$ is a $\mathbb{C}$-simulation function.}
\end{example}
Samet et.al \cite{8} introduced the concept of $\alpha$-admissible maps and Abdeljawad \cite{1} suggested the notion of $\alpha$-admissiblity for a pair of mappings. Motivated by them we introduce the concept of $\alpha_{\mathbb{C}}$-admissible mappings.
\begin{definition}
\emph{Let $\alpha :X \times X \rightarrow \mathbb{S}$ and $S,T : X \rightarrow X$, then the mapping $T$ is called $\alpha_{\mathbb{C}}$-admissible if for all $x,y \in X$ we have $$1\precsim \alpha(x,y) \quad \mbox{implies} \quad 1\precsim \alpha(Tx,Ty)$$ and the pair $(S,T)$ is $\alpha_{\mathbb{C}}$-admissible if 
$$ 1\precsim \alpha(x,y) \quad \mbox{implies} \quad 1\precsim \alpha(Sx,Ty) \thinspace \thinspace \mbox{and} \thinspace \thinspace 1\precsim \alpha(Tx,Sy).$$}
\end{definition}
\begin{definition}
\emph{Let $\alpha :X \times X \rightarrow \mathbb{S}$ and $S,T : X \rightarrow X$, then the pair $(S,T)$ is $\alpha_{\mathbb{C}}$\emph{-orbital admissible} if
$$ 1\precsim \alpha(x,Sx) \quad \mbox{implies} \quad 1\precsim \alpha(Sx,TSx) \thinspace \thinspace \mbox{and} \thinspace \thinspace 1\precsim \alpha(Tx,S^2x)$$
and 
$$ 1\precsim \alpha(x,Tx) \quad \mbox{implies} \quad 1\precsim \alpha(Sx,T^2x) \thinspace \thinspace \mbox{and} \thinspace \thinspace 1\precsim \alpha(Tx,STx).$$ Furthermore, the pair $(S,T)$ is \emph{triangular $\alpha_{\mathbb{C}}$-orbital admissible} if
$$ 1\precsim \alpha(x,y) \thinspace \thinspace \mbox{and} \thinspace 1\precsim \alpha(y,Sy) \quad \mbox{implies} \quad 1\precsim \alpha(x,Sy)$$
and 
$$ 1\precsim \alpha(w,z) \thinspace \thinspace \mbox{and} \thinspace 1\precsim \alpha(z,Tz) \quad \mbox{implies} \quad 1\precsim \alpha(w,Tz).$$}
\end{definition}
\begin{definition}
\emph{Let $(X,d)$ be a complex valued metric space, then $X$ is said to be $\alpha_{\mathbb{C}}$\emph{-regular} if for every sequence $\{x_n\} \subset X$ such that $x_n \rightarrow x$ in $(X,d)$ there exists a subsequence $\{x_{n_k}\}$ of $\{x_n\}$ such that
$$ 1\precsim \alpha(x_{n_k},x) \thinspace \thinspace \mbox{and} \thinspace \thinspace 1\precsim \alpha(x,x_{n_k}).$$}
\end{definition}
\begin{definition}
\emph{Let $(X,d)$ be a complex valued metric space, $S,T: X \rightarrow X$ be self-mappings and $\xi \in \mathscr{Z}_{\mathbb{C}}$. Then the pair $(S,T)$ is called an $\alpha_{\mathbb{C}}$\emph{-admissible $\mathscr{Z}_{\mathbb{C}}$-contraction} with respect to $\xi$ if for all $x,y \in X$, the following conditions are satisfied:
\begin{enumerate}[(i)]
\item $0\precsim \alpha(x,y) d(Sx,Ty)$,
\item $0\precsim \xi(\alpha(x,y) d(Sx,Ty),d(x,y))$,
\item $0\precsim \xi(\vert \alpha(x,y) d(Sx,Ty) \vert,\vert d(x,y) \vert )$.
\end{enumerate}} 
\end{definition}
\begin{theorem}\label{theorem1}
Let $(X,d)$ be a complete complex valued metric space and $S,T:X \rightarrow X$ be self-mappings such that the pair $(S,T)$ is an $\alpha_{\mathbb{C}}$-admissible $\mathscr{Z}_{\mathbb{C}}$-contraction with respect to $\xi$. Suppose that 

(i) $(S,T)$ is triangular $\alpha_{\mathbb{C}}$-orbital admissible,

(ii) there exists $x_0 \in X$ such that $1 \precsim \alpha(x_0,Sx_0)$ and $1\precsim \alpha(Sx_0,x_0)$,

(iii) $X$ is $\alpha_{\mathbb{C}}$-regular.\\
Moreover, if for $x,y \in$ Fix$(T)$ $\cap$ Fix$(S)$ we have $1\precsim \alpha(x,y)$. Then $S$ and $T$ have a unique common fixed point in $X$.
\end{theorem}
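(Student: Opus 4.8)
The plan is to run the standard alternating Picard argument for a pair of maps, driven by the simulation inequality taken in modulus. First I would fix $x_0$ as in hypothesis (ii) and build the orbit $x_{2n+1}=Sx_{2n}$, $x_{2n+2}=Tx_{2n+1}$. Starting from $1\precsim\alpha(x_0,Sx_0)$ and applying the two implications defining $\alpha_{\mathbb{C}}$-orbital admissibility alternately, one propagates $1\precsim\alpha(x_n,x_{n+1})$ for every $n$; the triangular part then upgrades this to $1\precsim\alpha(x_i,x_j)$ for all $i<j$, which is exactly what is needed to feed non-consecutive pairs into the contraction later. Using in addition the second inequality $1\precsim\alpha(Sx_0,x_0)$ in (ii), the same bookkeeping maintains the admissibility in the order required so that, together with the symmetry $d(u,v)=d(v,u)$, the contraction may be applied to each consecutive pair whether it presents as $d(Sx_{2n},Tx_{2n+1})$ or as $d(Sx_{2n+2},Tx_{2n+1})$. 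Keeping track of the order of the arguments of $\alpha$ along the orbit is the one point of delicate bookkeeping here.

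Next I would establish the decay of consecutive distances. Writing $d_n=d(x_n,x_{n+1})$ and assuming $d_n\neq0$, condition (iii) of the contraction gives $0\precsim\xi(\lvert\alpha\,d_{n+1}\rvert,\lvert d_n\rvert)$ for the relevant $\alpha=\alpha(x_n,x_{n+1})$; combined with property (ii) of a $\mathbb{C}$-simulation function this forces $\lvert\alpha\,d_{n+1}\rvert<\lvert d_n\rvert$, and since $1\precsim\alpha$ implies $\lvert\alpha\rvert\ge1$, we obtain $\lvert d_{n+1}\rvert\le\lvert\alpha\,d_{n+1}\rvert<\lvert d_n\rvert$. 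Hence $\{\lvert d_n\rvert\}$ is strictly decreasing and converges to some $r\ge0$. To see $r=0$, suppose $r>0$, set $s_n=\lvert d_n\rvert$ and $t_n=\lvert\alpha\,d_{n+1}\rvert$; the squeeze $\lvert d_{n+1}\rvert\le t_n<\lvert d_n\rvert$ gives $\lim\lvert t_n\rvert=\lim\lvert s_n\rvert=r>0$, so property (iii) of the simulation function yields $\limsup_n\xi(t_n,s_n)\precnsim0$, contradicting $0\precsim\xi(t_n,s_n)$ from the contraction. Thus $\lvert d_n\rvert\to0$.

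The main obstacle is the Cauchy property, which I expect to carry the weight of the proof, since a simulation function provides no geometric rate. I would argue by the standard non-Cauchy contradiction for simulation-function contractions: if $\{x_n\}$ is not Cauchy, the Cauchy criterion ($\lvert d(x_n,x_{n+m})\rvert\to0$) fails, producing $\varepsilon>0$ and indices $m_k>n_k\ge k$ with $\lvert d(x_{m_k},x_{n_k})\rvert\ge\varepsilon$ and $m_k$ minimal, chosen (after passing to subsequences) so that $n_k$ is odd and $m_k$ is even. Minimality together with $\lvert d_n\rvert\to0$ and the triangle inequality forces $\lvert d(x_{m_k},x_{n_k})\rvert\to\varepsilon$ and, via further triangle estimates, $\lvert d(x_{n_k-1},x_{m_k-1})\rvert\to\varepsilon$. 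With the parities arranged, $d(x_{n_k},x_{m_k})=d(Sx_{n_k-1},Tx_{m_k-1})$ and $1\precsim\alpha(x_{n_k-1},x_{m_k-1})$ by the triangular step, so setting $s_k=\lvert d(x_{n_k-1},x_{m_k-1})\rvert$ and $t_k=\lvert\alpha\,d(Sx_{n_k-1},Tx_{m_k-1})\rvert$, both with limit $\varepsilon>0$, property (iii) of the simulation function again gives $\limsup_k\xi(t_k,s_k)\precnsim0$, contradicting condition (iii) of the contraction. Hence $\{x_n\}$ is Cauchy and, by completeness, $x_n\to x^*$ for some $x^*\in X$.

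Finally I would identify and pin down the limit. By $\alpha_{\mathbb{C}}$-regularity there is a subsequence with $1\precsim\alpha(x_{n_k},x^*)$ and $1\precsim\alpha(x^*,x_{n_k})$; selecting indices of the appropriate parity, the contraction applied to $d(Sx^*,Tx_{n_k})$ gives $\lvert d(Sx^*,x_{n_k+1})\rvert<\lvert d(x^*,x_{n_k})\rvert\to0$, and applied to $d(Sx_{n_k},Tx^*)$ gives the analogous estimate for $T$; a triangle inequality then forces $d(x^*,Sx^*)=0=d(x^*,Tx^*)$, so $Sx^*=x^*=Tx^*$ and $x^*\in\mathrm{Fix}(S)\cap\mathrm{Fix}(T)$. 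For uniqueness, if $x^*,y^*$ are both common fixed points then the final hypothesis gives $1\precsim\alpha(x^*,y^*)$, and applying the contraction to $d(Sx^*,Ty^*)=d(x^*,y^*)$ yields, exactly as in the decay step, $\lvert d(x^*,y^*)\rvert<\lvert d(x^*,y^*)\rvert$ unless $d(x^*,y^*)=0$; hence $x^*=y^*$.
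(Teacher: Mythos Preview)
Your proposal is correct and follows essentially the same route as the paper's proof: build the alternating orbit, propagate $1\precsim\alpha(x_n,x_{n+1})$ (and the reversed inequalities) via orbital admissibility, use the simulation inequality to get strict decrease of $\lvert d(x_n,x_{n+1})\rvert$, invoke property~(iii) of $\xi$ to kill a positive limit, run the standard non-Cauchy contradiction through $\xi$, and finish with $\alpha_{\mathbb{C}}$-regularity and the uniqueness step. The only cosmetic differences are that the paper works with the even subsequence $\{x_{2n}\}$ and a complex threshold $c$ (so parities are automatic), whereas you use a real $\varepsilon$ and arrange parities by passing to a subsequence, and the paper first applies contraction condition~(ii) and then takes moduli while you go straight through condition~(iii); neither changes the substance of the argument.
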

\begin{proof}
Construct a sequence $\{x_n\}$ in $X$ such that
$$x_{2n+1}=Sx_{2n}, \thinspace \thinspace x_{2n+2}=Tx_{n+1}, \thinspace \thinspace \mbox{for all} \thinspace n=0,1,2,\ldots .$$
Since $(S,T)$ is $\alpha_{\mathbb{C}}$-orbital admissible, $1 \precsim \alpha(x_0,Sx_0)$ and $1\precsim \alpha(Sx_0,x_0)$ then $1 \precsim \alpha(x_n,x_{n+1})$ and $1\precsim \alpha(x_{n+1},x_n)$. For all $n \geq 0$ we have
\begin{align*}
0& \precsim \xi(\alpha(x_{2n},x_{2n+1}) d(Sx_{2n}, Tx_{2n+1}), d(x_{2n},x_{2n+1}))\\
& \precnsim d(x_{2n},x_{2n+1})-\alpha(x_{2n},x_{2n+1}) d(x_{2n+1}, x_{2n+2}).
\end{align*}
Therefore, $\vert d(x_{2n+1}, x_{2n+2}) \vert \leq \vert \alpha(x_{2n},x_{2n+1}) d(x_{2n+1}, x_{2n+2}) \vert < \vert d(x_{2n},x_{2n+1}) \vert $. Also,
\begin{align*}
0& \precsim \xi(\alpha(x_{2n+2},x_{2n+1})d(Sx_{2n+2},Tx_{2n+1}),d(x_{2n+2},x_{2n+1}))\\
& \precnsim d(x_{2n+2},x_{2n+1})-\alpha(x_{2n+2},x_{2n+1})d(x_{2n+3},x_{2n+2})
\end{align*}
which implies that $\vert d(x_{2n+2},x_{2n+3}) \vert < \vert d(x_{2n+1},x_{2n+2}) \vert $. Then $\{\vert d(x_n,x_{n+1}) \vert \}$ is a monotonically non-increasing sequence of non-negative real numbers therefore, it is convergent. Let $\lim\limits_{n \rightarrow \infty} \vert d(x_n,x_{n+1}) \vert =L \geq 0$. Therefore, $\lim\limits_{n \rightarrow \infty} \vert d(x_{2n+1},x_{2n+2})\vert$ $=L$. Suppose that $L>0$. Also, $$0\precsim \xi (\alpha(x_{2n},x_{2n+1}) d(x_{2n+1},x_{2n+2}),d(x_{2n},x_{2n+1}))$$ which gives $\vert d(x_{2n+1},x_{2n+2}) \vert \leq \vert \alpha(x_{2n},x_{2n+1})d(x_{2n+1},x_{2n+2}) \vert < \vert d(x_{2n},x_{2n+1}) \vert$. Therefore, $$\lim\limits_{n \rightarrow \infty} \vert \alpha(x_{2n},x_{2n+1})d(x_{2n+1}, x_{2n+2}) \vert =L.$$ Then using (iii) of Definition \ref{definition2.1} we get,  $0\precsim \xi (\vert \alpha(x_{2n},x_{2n+1}) d(x_{2n+1},x_{2n+2})\vert , \vert d(x_{2n},x_{2n+1})\vert) \precnsim 0$, a contradiction. Therefore, 
\begin{equation}\label{eq1}
\lim\limits_{n\rightarrow \infty}\vert d(x_n,x_{n+1}) \vert =0. 
\end{equation}
Now we prove that $\{x_n \}$ is a Cauchy sequence in $(X,d)$. It suffices to show that $\{x_{2n}\}$ is a Cauchy sequence in $(X,d)$. On the contrary, assume that $\{x_{2n}\}$ is not a Cauchy sequence. Then there exists $c \in \mathbb{C}$ with $0 \prec c$ for which we can find two subsequences $\{x_{2m_i}\}$ and $\{x_{2n_i}\}$ of $\{x_{2n}\}$ such that $n_i$ is the smallest positive integer with 
$$2n_i > 2m_i \geq i \quad \mbox{and} \quad c \precsim d(x_{2m_i},x_{2n_i}).$$
This means that  $ d(x_{2m_i},x_{2n_i-2})\prec c $. Consider
\begin{align*}
c &\precsim  d(x_{2m_i},x_{2n_i}) \\
& \precsim d(x_{2m_i},x_{2n_i-1})+d(x_{2n_i-1},x_{2n_i})\\
& \precsim  d(x_{2m_i},x_{2n_i-2}) +  d(x_{2n_i-2},x_{2n_i-1})+ d(x_{2n_i-1},x_{2n_i})
\end{align*} 
which implies that 
$$ \vert c \vert \leq \vert d(x_{2m_i},x_{2n_i}) \vert < \vert c\vert + \vert d(x_{2n_i-2},x_{2n_i-1})\vert+\vert d(x_{2n_i-1},x_{2n_i}) \vert.$$
Letting $i \rightarrow \infty$ and using (\ref{eq1}) we get, $\lim\limits_{i \rightarrow \infty} \vert d(x_{2m_i},x_{2n_i}) \vert = \vert c \vert$ and  $\lim\limits_{i \rightarrow \infty} \vert d(x_{2m_i},x_{2n_i-1}) \vert= \vert c \vert$. Also, 
\begin{align*}
c & \precsim d(x_{2m_i},x_{2n_i})\\
& \precsim d(x_{2m_i},x_{2m_i+1})+d(x_{2m_i+1},x_{2n_i})\\
 & \precsim  d(x_{2m_i},x_{2m_i+1})  + d(x_{2m_i+1},x_{2m_i})+  d(x_{2m_i},x_{2n_i}).
 \end{align*} This gives $\lim\limits_{i \rightarrow \infty} \vert d(x_{2m_i+1},x_{2n_i}) \vert = \vert c \vert$. Since $0 \precsim \xi(\alpha(x_{2m_i},x_{2n_i-1})d(x_{2m_i+1},x_{2n_i}),d(x_{2m_i},x_{2n_i-1}))$, $\lim\limits_{i \rightarrow \infty} \vert \alpha(x_{2m_i},x_{2n_i-1})d(x_{2m_i+1},x_{2n_i}) \vert =\vert c \vert$. Then $0 \precsim \xi(\vert\alpha(x_{2m_i},x_{2n_i-1})d(x_{2m_i+1},x_{2n_i})\vert,\vert d(x_{2m_i},x_{2n_i-1})\vert)\\ \precnsim 0$, a contradiction. Therefore, $\{x_{2n}\}$ is a Cauchy sequence in $(X,d)$. Then there exists $u \in X$ such that $\lim\limits_{n \rightarrow \infty} \vert d(x_n,u) \vert=0$. We observe that $$d(Su,u) \precsim d(Su,Tx_{2n_k+1})+d(Tx_{2n_k+1},u).$$ Since $X$ is $\alpha_{\mathbb{C}}$-regular, $1 \precsim \alpha(u,x_{2n_k+1})$ which implies $0\precsim \xi(\alpha(u,x_{2n_k+1}) d(Su,Tx_{2n_k+1}),d(u,x_{2n_k+1}))$. Then $\vert d(Su,Tx_{2n_k+1})\vert < \vert d(u,x_{2n_k+1})\vert$. Therefore, we have $\vert d(Su,u)\vert < \vert d(u,x_{2n_k+1})\vert+ \vert d(x_{2n_k+2},u)\vert $. Letting $i \rightarrow \infty$ we get, $Su=u$. Also, $$d(u,Tu) \precsim d(u,Sx_{2n_k})+d(Sx_{2n_k},Tu).$$ Since $X$ is $\alpha_{\mathbb{C}}$-regular, $ 1 \precsim \alpha(x_{2n_k},u)$ which gives $0\precsim \xi(\alpha(x_{2n_k},u) d(Sx_{2n_k},Tu), d(x_{2n_k},u))$. Therefore, $\vert d(Sx_{2n_k},Tu) \vert < \vert d(x_{2n_k},u)) \vert$. Therefore, we have $\vert d(u,Tu) \vert < \vert d(u,x_{2n_k+1}) \vert + \vert d(x_{2n_k},u) \vert$. Letting $i \rightarrow \infty$ we get, $Tu=u$. Thus, $u$ is a common fixed point of $S$ and $T$. Let $v$ be another common fixed point of $S$ and $T$. Consider $$0 \precsim \xi(\alpha(u,v)d(Su,Tv),d(u,v)) \precnsim d(u,v)-\alpha(u,v)d(u,v).$$ This implies that $\vert d(u,v) \vert \leq \vert \alpha(u,v)d(u,v) \vert < \vert d(u,v) \vert$, a contradiction. Hence, $S$ and $T$ have a unique common fixed point in $X$. 
\end{proof}
\begin{corollary}
Let $(X,d)$ be a complete complex valued metric space and $T:X \rightarrow X$ satisfies

(i) $0\precsim \alpha(x,y)d(Tx,Ty)$,

(ii) $\alpha(x,y)d(Tx,Ty) \precsim \lambda d(x,y)$ \\
for all $x,y \in X$, where $\lambda$ is a real number such that $0<\lambda <1$. Suppose that the following conditions hold:

(i) $T$ is triangular $\alpha_{\mathbb{C}}$-orbital admissible,

(ii) there exists $x_0 \in X$ such that $1 \precsim \alpha(x_0,Tx_0)$,

(iii) $X$ is $\alpha_{\mathbb{C}}$-regular.\\
Moreover, if for $x,y \in$ Fix$(T)$ we have $1\precsim \alpha(x,y)$. Then $T$ has a unique fixed point in $X$.
\end{corollary}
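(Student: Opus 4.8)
The plan is to argue directly, following the template of Theorem \ref{theorem1} but exploiting that for a single map the Picard iteration does not alternate, so only one-sided admissibility is needed; this is precisely why hypothesis (ii) here asks only for $1\precsim\alpha(x_0,Tx_0)$ rather than a symmetric pair of inequalities, and it is why one cannot simply quote Theorem \ref{theorem1} with $S=T$. First I would record that conditions (i)--(ii) make $(T,T)$ an $\alpha_{\mathbb{C}}$-admissible $\mathscr{Z}_{\mathbb{C}}$-contraction with respect to the $\mathbb{C}$-simulation function $\xi_1(t,s)=\lambda s-t$: indeed $0\precsim\alpha(x,y)d(Tx,Ty)$ is (i), the inequality $\alpha(x,y)d(Tx,Ty)\precsim\lambda d(x,y)$ is exactly $0\precsim\xi_1(\alpha(x,y)d(Tx,Ty),d(x,y))$, and since $0\precsim\alpha(x,y)d(Tx,Ty)\precsim\lambda d(x,y)$ the modulus monotonicity recorded in the Preliminaries yields $\vert\alpha(x,y)d(Tx,Ty)\vert\le\lambda\vert d(x,y)\vert$, which is the third contraction requirement.

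Starting from $x_0$ set $x_{n+1}=Tx_n$. From $1\precsim\alpha(x_0,x_1)$ and the triangular $\alpha_{\mathbb{C}}$-orbital admissibility of $T$ one gets $1\precsim\alpha(x_n,x_{n+1})$ for every $n$ by induction. The key arithmetic step is that $1\precsim\alpha(x_n,x_{n+1})$ forces $\vert\alpha(x_n,x_{n+1})\vert\ge\operatorname{Re}\alpha(x_n,x_{n+1})\ge 1$; combining this with $\vert\alpha(x_n,x_{n+1})d(x_{n+1},x_{n+2})\vert\le\lambda\vert d(x_n,x_{n+1})\vert$ gives $\vert d(x_{n+1},x_{n+2})\vert\le\lambda\vert d(x_n,x_{n+1})\vert$, and iterating produces the geometric bound $\vert d(x_n,x_{n+1})\vert\le\lambda^n\vert d(x_0,x_1)\vert\to 0$.

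For the Cauchy property I would, for $m>n$, apply the triangle inequality of the complex metric and then pass to moduli to obtain $\vert d(x_n,x_m)\vert\le\sum_{k=n}^{m-1}\vert d(x_k,x_{k+1})\vert\le\frac{\lambda^n}{1-\lambda}\vert d(x_0,x_1)\vert\to 0$; this is cleaner than the contradiction argument in Theorem \ref{theorem1} because the decay is explicitly geometric. By completeness $x_n\to u$ for some $u\in X$. Using $\alpha_{\mathbb{C}}$-regularity choose a subsequence with $1\precsim\alpha(u,x_{n_k})$; then the contraction gives $\vert d(Tu,Tx_{n_k})\vert\le\lambda\vert d(u,x_{n_k})\vert$, and from $d(Tu,u)\precsim d(Tu,Tx_{n_k})+d(x_{n_k+1},u)$ together with $x_{n_k+1}\to u$ we conclude $\vert d(Tu,u)\vert=0$, i.e. $Tu=u$.

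Finally, if $v$ is another fixed point then the moreover hypothesis gives $1\precsim\alpha(u,v)$, whence $\alpha(u,v)d(u,v)=\alpha(u,v)d(Tu,Tv)\precsim\lambda d(u,v)$, so $\vert d(u,v)\vert\le\vert\alpha(u,v)d(u,v)\vert\le\lambda\vert d(u,v)\vert$ with $\lambda<1$ forces $d(u,v)=0$, proving uniqueness. The only delicate points are the repeated passage from partial-order inequalities to modulus inequalities, justified throughout by the implication $0\precsim z_1\precsim z_2\Rightarrow\vert z_1\vert\le\vert z_2\vert$, and the elementary observation $\vert\alpha\vert\ge 1$ whenever $1\precsim\alpha$; beyond this bookkeeping no genuine obstacle arises.
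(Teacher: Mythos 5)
Your proof is correct, but it deliberately takes a different route from the paper: the paper's entire proof is the one-line specialization ``take $\xi(t,s)=\lambda s-t$ in Theorem \ref{theorem1}'' (with $S=T$), whereas you rebuild the argument directly for a single map. Your opening observation is apt and actually exposes a small gap that the paper's one-liner glosses over: hypothesis (ii) of Theorem \ref{theorem1} demands both $1\precsim\alpha(x_0,Sx_0)$ and $1\precsim\alpha(Sx_0,x_0)$, while the corollary as stated supplies only the forward inequality $1\precsim\alpha(x_0,Tx_0)$; since the Picard orbit of a single map never needs the ``backward'' values $\alpha(x_{n+1},x_n)$ (one always bounds $d(x_{n+1},x_{n+2})=d(Tx_n,Tx_{n+1})$ via the forward $\alpha(x_n,x_{n+1})$), your direct argument legitimizes the asymmetric hypothesis, which a literal citation of Theorem \ref{theorem1} does not. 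Your proof also buys two further simplifications: because $|\alpha|\geq 1$ whenever $1\precsim\alpha$ (as $|\alpha|\geq\operatorname{Re}\alpha\geq 1$), you get the explicit geometric decay $|d(x_n,x_{n+1})|\leq\lambda^n|d(x_0,x_1)|$ and hence the Cauchy property by a geometric-series estimate combined with Lemma 1.6, avoiding both the contradiction-based Cauchy argument of Theorem \ref{theorem1} and any appeal to axiom (iii) of Definition \ref{definition2.1}; and your passages from order inequalities to modulus inequalities are all justified by the stated implication $0\precsim z_1\precsim z_2\Rightarrow|z_1|\leq|z_2|$. What the paper's approach buys in exchange is brevity and unification: once Theorem \ref{theorem1} is in hand, the corollary (read with the symmetric initial condition implicitly assumed) is immediate, whereas your argument, though self-contained and strictly faithful to the hypotheses as written, repeats machinery the paper has already built.
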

\begin{proof}
Choosing $\xi(t,s)= \lambda s-t$ for all $t,s \in \mathbb{S}$, where $\lambda$ is a real number such that $0< \lambda <1$ in Theorem \ref{theorem1} we have the desired result.
\end{proof}
It is worth noting that Banach contraction principle is an immediate consequence of Theorem \ref{theorem1}.
\begin{corollary}\label{corollary1}
\textbf{\emph{(Banach Contraction)}} Let $(X,d)$ be a complete complex valued metric space. Suppose that $T:X \rightarrow X$ satisfies
$$d(Tx,Ty)\precsim \lambda d(x,y)$$ for all $x,y \in X$, where $\lambda$ is a real number such that $0<\lambda <1$. Then $T$ has a unique fixed point in $X$.
\end{corollary}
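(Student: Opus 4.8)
The plan is to obtain the Banach contraction principle as a direct specialization of Theorem \ref{theorem1} (equivalently, of the preceding corollary), by taking $S=T$, the constant weight $\alpha(x,y)\equiv 1$, and the simulation function $\xi(t,s)=\lambda s-t$ from Example (i). First I would verify that, with these choices, the pair $(T,T)$ is an $\alpha_{\mathbb{C}}$-admissible $\mathscr{Z}_{\mathbb{C}}$-contraction with respect to $\xi$. Condition (i) of that definition reduces to $0\precsim d(Tx,Ty)$, which holds because $d$ is a complex valued metric; condition (ii) reduces to $0\precsim \lambda d(x,y)-d(Tx,Ty)$, i.e. $d(Tx,Ty)\precsim \lambda d(x,y)$, which is precisely the hypothesis.

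The one step requiring a small argument is condition (iii), namely $0\precsim \lambda\vert d(x,y)\vert-\vert d(Tx,Ty)\vert$, equivalently $\vert d(Tx,Ty)\vert\le \lambda\vert d(x,y)\vert=\vert\lambda d(x,y)\vert$. This follows from the hypothesis $0\precsim d(Tx,Ty)\precsim \lambda d(x,y)$ together with the observation recorded in the preliminaries that $0\precsim z_1\precsim z_2$ forces $\vert z_1\vert\le\vert z_2\vert$ (strict when $z_1\precnsim z_2$, with equality only when $z_1=z_2$). This is the only place where the modulus estimate, rather than the partial order itself, is invoked.

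It then remains to check the structural hypotheses (i)--(iii) of Theorem \ref{theorem1} and the final coincidence condition. Since $\alpha\equiv 1$ and $1\precsim 1$ holds by reflexivity of $\precsim$, every inequality of the form $1\precsim\alpha(\cdot,\cdot)$ is automatically satisfied. Consequently the triangular $\alpha_{\mathbb{C}}$-orbital admissibility of $(T,T)$, the existence of $x_0$ with $1\precsim\alpha(x_0,Tx_0)$ and $1\precsim\alpha(Tx_0,x_0)$, the $\alpha_{\mathbb{C}}$-regularity of $X$ (any subsequence serves), and the requirement $1\precsim\alpha(x,y)$ for $x,y\in\mathrm{Fix}(T)$ all hold trivially.

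Finally, Theorem \ref{theorem1} yields a unique common fixed point $u$ of $S=T$ and $T$; since the conclusion of that theorem produces a $u$ with $Tu=u$, and uniqueness of the common fixed point transfers to uniqueness of the fixed point of $T$, the corollary follows. I do not expect any genuine obstacle: the entire content is the bookkeeping of specializing $\alpha$ to the constant $1$, and the only non-formal point is the passage from the partial-order contraction to the modulus contraction in condition (iii), which is dispatched by the preliminary observation above.
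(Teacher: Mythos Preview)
Your proposal is correct and is exactly the specialization the paper has in mind: the paper states the corollary right after taking $\xi(t,s)=\lambda s-t$ in Theorem~\ref{theorem1} and simply remarks that the Banach contraction principle is an immediate consequence, which amounts precisely to your choices $S=T$, $\alpha\equiv 1$, $\xi(t,s)=\lambda s-t$. Your verification of condition~(iii) via the modulus observation from the preliminaries is the only nontrivial point, and you handle it correctly.
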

\begin{corollary}
Let $(X,d)$ be a complete complex valued metric space and $T:X \rightarrow X$ satisfies

(i) $0\precsim \alpha(x,y)d(Tx,Ty)$,

(ii) $\phi(\alpha(x,y)d(Tx,Ty)) \precsim \lambda \psi(d(x,y))$ 

(iii) $\phi(\vert \alpha(x,y)d(Tx,Ty)\vert) \precsim \lambda \psi(\vert d(x,y) \vert)$\\
for all $x,y \in X$, where $\phi,\psi: \mathbb{S} \rightarrow \mathbb{S}$ are continuous functions, $\phi(t)=0=\psi(t)$ if and only if $t=0$ and $\psi(t)\precnsim t \precsim \phi (t)$ for all $0\precnsim t$. Suppose that the following conditions hold:

(i) $T$ is triangular $\alpha_{\mathbb{C}}$-orbital admissible,

(ii) there exists $x_0 \in X$ such that $1 \precsim \alpha(x_0,Tx_0)$,

(iii) $X$ is $\alpha_{\mathbb{C}}$-regular.\\
Moreover, if for $x,y \in$ Fix$(T)$ we have $1\precsim \alpha(x,y)$. Then $T$ has a unique fixed point in $X$.
\end{corollary}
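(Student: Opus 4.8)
The plan is to deduce this corollary from Theorem \ref{theorem1} by specializing to a single map and manufacturing the appropriate $\mathbb{C}$-simulation function out of $\phi$ and $\psi$. Concretely, I would put $S=T$ and take
$$\xi(t,s)=\lambda\,\psi(s)-\phi(t),\qquad t,s\in\mathbb{S}.$$
First I would confirm that $\xi\in\mathscr{Z}_{\mathbb{C}}$. Setting $\tilde{\psi}:=\lambda\psi$, the hypotheses $0<\lambda<1$ and $\psi(t)\precnsim t$ give $\tilde{\psi}(t)=\lambda\psi(t)\precnsim\psi(t)\precnsim t\precsim\phi(t)$, so $\xi(t,s)=\tilde{\psi}(s)-\phi(t)$ has exactly the form of $\xi_2$ in Example (ii) (with $\tilde{\psi}$ in place of $\psi$), whence $\xi$ is a $\mathbb{C}$-simulation function. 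If one prefers a direct check: $\xi(0,0)=0$ follows from $\phi(0)=\psi(0)=0$; for $0\precnsim t,s$ one has $\lambda\psi(s)\precnsim s$ (since $s\neq 0\Rightarrow\psi(s)\neq 0$) and $-\phi(t)\precsim -t$, so $\xi(t,s)\precnsim s-t$; and if $|t_n|,|s_n|\to L>0$, continuity of $\phi,\psi$ yields $\lim_{n\to\infty}\xi(|t_n|,|s_n|)=\lambda\psi(L)-\phi(L)\precnsim 0$, because $\lambda\psi(L)\precnsim\psi(L)\precsim\phi(L)$.

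Next I would verify that the three contractive hypotheses of the corollary are precisely the three defining conditions of an $\alpha_{\mathbb{C}}$-admissible $\mathscr{Z}_{\mathbb{C}}$-contraction with respect to this $\xi$, read with $S=T$. Hypothesis (i), $0\precsim\alpha(x,y)d(Tx,Ty)$, is condition (i) of that definition verbatim. Hypothesis (ii), $\phi(\alpha(x,y)d(Tx,Ty))\precsim\lambda\psi(d(x,y))$, rearranges to $0\precsim\lambda\psi(d(x,y))-\phi(\alpha(x,y)d(Tx,Ty))=\xi(\alpha(x,y)d(Tx,Ty),d(x,y))$, which is condition (ii). Similarly, hypothesis (iii) rearranges to $0\precsim\xi(|\alpha(x,y)d(Tx,Ty)|,|d(x,y)|)$, which is condition (iii). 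Thus $(T,T)$ is an $\alpha_{\mathbb{C}}$-admissible $\mathscr{Z}_{\mathbb{C}}$-contraction with respect to $\xi$.

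Finally I would match the remaining structural hypotheses. Interpreting the single-map statement as a statement about the pair $(T,T)$, hypothesis (i) is exactly triangular $\alpha_{\mathbb{C}}$-orbital admissibility of $(T,T)$; hypothesis (ii) supplies the starting point $x_0$ with $1\precsim\alpha(x_0,Tx_0)$ that, via orbital admissibility, drives the iteration $x_{n+1}=Tx_n$; hypothesis (iii) provides $\alpha_{\mathbb{C}}$-regularity; and the stated condition on $\mathrm{Fix}(T)$ furnishes the input needed for the uniqueness step. Theorem \ref{theorem1} then yields a point $u$ with $Su=Tu=u$, i.e. a unique fixed point of $T$. The only genuine obstacle is the first step—certifying that $\xi(t,s)=\lambda\psi(s)-\phi(t)$ satisfies the limit condition (iii) of Definition \ref{definition2.1}, where the continuity of $\phi,\psi$ together with the strict slack created by $\lambda<1$ is what forces $\limsup\xi\precnsim 0$; once that is in hand, everything else is a direct rewriting of the hypothesized inequalities.
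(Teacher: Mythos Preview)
Your proposal is correct and follows the same route as the paper: set $S=T$, define $\xi(t,s)=\lambda\psi(s)-\phi(t)$ (the paper writes $\xi(t,s)=\psi(s)-\phi(t)$, apparently absorbing the $\lambda$ into $\psi$), and invoke Theorem~\ref{theorem1}. Your verification that this $\xi$ lies in $\mathscr{Z}_{\mathbb{C}}$ via the Example~(ii) template with $\tilde\psi=\lambda\psi$ is exactly the mechanism the paper relies on, only spelled out in more detail than the paper's one-line proof.
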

\begin{proof}
 Take $\xi(t,s)=\psi(s)-\phi(t)$, where $\phi,\psi: \mathbb{S} \rightarrow \mathbb{S}$ are continuous functions, $\phi(t)=0=\psi(t)$ if and only if $t=0$ and $\psi(t)\precnsim t \precsim \phi (t)$ for all $0\precnsim t$ in Theorem \ref{theorem1}.
\end{proof}
\begin{example}
\emph{Let $X= \mathbb{C}$ and define $d:X \times X \rightarrow \mathbb{C}$ as $$d(z_1,z_2)=\vert x_1-y_1 \vert + i \vert x_2-y_2 \vert,$$ where $z_1=x_1 +ix_2$ and $z_2=y_1+iy_2$. Then $(X,d)$ is a complete complex valued metric space. Define $T:X \rightarrow X$ as $Tz=\frac{z+i}{2}$ and suppose that $\lambda$ is a real number such that $\frac{1}{2} \leq \lambda <1$. It is easily seen that $d(Tx,Ty) \precsim \lambda d(x,y)$ for all $x,y \in X$. Therefore, by Corollary \ref{corollary1} $T$ has a unique fixed point and the fixed point is $i$. }
\end{example}
The following example illustrates the use of Corollary  \ref{corollary1} in solving a nonlinear integral equation.
\begin{example}\label{example1}
\emph{Consider the nonlinear integral equation $$x(t)= 2+ \int_{a}^{t} (x(s)+s^3)e^{1-2s} ds, \quad  \quad t \in [a,b].$$  Let $X=C([a,b],\mathbb{R})$ be the set of continuous real valued functions defined on $[a,b]$. Suppose that $a>1$ and $b-a< e^{2a}-1$. Define $d:X \times X \rightarrow \mathbb{C}$ as $$d(z_1,z_2)= \max\limits_{t \in [a,b]} \vert z_1-z_2 \vert \frac{\sqrt{a^2+b^2}}{a} e^{i \taninv (\frac{b}{a} )}.$$ Define $T: X \rightarrow X$ as $$Tx(t)= 2+ \int_{a}^{t} (x(s)+s^3)e^{1-2s} ds, \quad  \quad t \in [a,b].$$ For $x,y \in X$ consider
\begin{align*}
\vert Tx(t)-Ty(t)\vert &= \Big\vert \int_{a}^{t}(x(s)-y(s))e^{1-2s} ds \Big\vert \\
& \leq \int_{a}^{t}\vert x(s)-y(s) \vert e^{1-2a} ds.
\end{align*}
Now 
\begin{align*}
d(Tx,Ty) & = \max\limits_{t \in [a,b]} \vert Tx(t)-Ty(t) \vert  \frac{\sqrt{a^2+b^2}}{a} e^{i \taninv (\frac{b}{a} )}\\
&\precsim \frac{b-a}{e^{2a-1}}d(x,y)
\end{align*}
Then for $\lambda=\frac{b-a}{e^{2a-1}}$ all the conditions of Corollary \ref{corollary1} are satisfied. Therefore, $T$ has a unique fixed point in $X$ which is the solution of the given nonlinear integral equation.} 
\end{example}
\begin{remark}
\emph{It is noted that from Example \ref{example1} we are able to find the solution of the following differential equation: $$\frac{dx}{dt}=(x+t^3)e^{1-2t}, \quad \quad t \in [1,2] \quad \mbox{and}\quad  x(1)=2 .$$ }
\end{remark}
We will give another application of Theorem \ref{theorem1} for finding common fixed points of two finite families of self-mappings.
\begin{definition}
\emph{(\cite{4}) Two families of self-mappings $\{S_i\}_{i=1}^n$ and $\{T_i\}_{i=1}^m$ are said to be pairwise commuting if the following conditions hold:}

\emph{(i) $S_i S_j=S_j S_i$, $i,j \in \{1,2,\ldots,n\}$,}

\emph{(ii) $T_iT_j=T_jT_i$, $i,j \in \{1,2,\ldots,m\}$,}

\emph{(iii) $S_iT_j=T_jS_i$, $i \in \{1,2,\ldots,n\}$ and $j \in \{1,2,\ldots,m\}$.}
\end{definition}
\begin{theorem}\label{theorem5}
Let $(X,d)$ be a complete complex valued metric space. Let $\{S_i\}_{i=1}^n$ and $\{T_i\}_{i=1}^m$ be a pairwise commuting finite families of self-mappings defined on $X$ such that $S=S_1S_2\ldots S_n$ and $T=T_1T_2\ldots T_m$. Suppose that the pair $(S,T)$ satisfies the hypothesis of Theorem \ref{theorem1}. Then the component maps of the two families $\{S_i\}_{i=1}^n$ and $\{T_i\}_{i=1}^m$ have a unique common fixed point in $X$.
\end{theorem}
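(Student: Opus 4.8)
The plan is to apply Theorem \ref{theorem1} to the composite pair $(S,T)$ to obtain a single unique common fixed point, and then to exploit the pairwise commutativity to promote that point to a common fixed point of every component map. Since the pair $(S,T)$ is assumed to satisfy the hypotheses of Theorem \ref{theorem1}, that theorem immediately supplies a unique common fixed point $u \in X$, that is, $Su = u$ and $Tu = u$.

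The heart of the argument is to show that each component map fixes $u$. The key observation is that the pairwise commuting conditions force every $S_i$ and every $T_j$ to commute not merely with individual factors but with the full products $S$ and $T$. Indeed, from $S_i S_j = S_j S_i$ for all $j$ one deduces that $S_i$ commutes with $S = S_1 S_2 \cdots S_n$, and from $S_i T_j = T_j S_i$ for all $j$ one deduces that $S_i$ commutes with $T = T_1 T_2 \cdots T_m$. Consequently $S(S_i u) = S_i(Su) = S_i u$ and $T(S_i u) = S_i(Tu) = S_i u$, so $S_i u$ is itself a common fixed point of the pair $(S,T)$. By the uniqueness clause of Theorem \ref{theorem1} we conclude $S_i u = u$. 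An identical computation, now using $T_j S_i = S_i T_j$ and $T_j T_i = T_i T_j$, shows $S(T_j u) = T_j u$ and $T(T_j u) = T_j u$, whence $T_j u = u$. Thus $u$ is a common fixed point of all the maps $S_1, \ldots, S_n, T_1, \ldots, T_m$.

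For uniqueness among common fixed points of the component maps, I would take any point $w$ fixed by every $S_i$ and every $T_j$. Applying the factors successively gives $Sw = S_1 S_2 \cdots S_n w = w$ and $Tw = T_1 T_2 \cdots T_m w = w$, so $w$ is a common fixed point of $(S,T)$; the uniqueness already established in Theorem \ref{theorem1} then forces $w = u$. This completes the proof of Theorem \ref{theorem5}.

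The argument is essentially formal once the commutation relations are available, and I do not anticipate analytic difficulties. The only point genuinely requiring care is the verification that each component map commutes with the entire products $S$ and $T$ rather than with a single factor; this is exactly what the definition of pairwise commuting families delivers, and tracking that $S_i u$ and $T_j u$ stay within the (singleton) set of common fixed points of $(S,T)$ is the main, albeit routine, bookkeeping step.
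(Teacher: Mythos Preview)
Your proof is correct and follows essentially the same approach as the paper: apply Theorem \ref{theorem1} to obtain the unique common fixed point $u$ of $(S,T)$, then use pairwise commutativity to show each $S_iu$ and $T_ju$ is again a common fixed point of $(S,T)$, forcing $S_iu=T_ju=u$ by uniqueness. Your write-up is in fact more explicit than the paper's, which omits the verification that each component commutes with the full products and does not spell out the uniqueness step for common fixed points of the component maps.
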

\begin{proof}
By Theorem \ref{theorem1}, $S$ and $T$ have a unique common fixed point in $X$ say $u$. We observe that for every $k$, $S(T_ku)=T_k(Su)=T_ku$ and $T(T_ku)=T_k(Tu)=T_ku$. Similarly, $S_ku$ (for every $k$) is also a common fixed  point of $S$ and $T$. By the uniqueness of common fixed point we conclude that u is a common fixed point of the families $\{S_i\}_{i=1}^n$ and $\{T_i\}_{i=1}^m$.
\end{proof}
If we take $S_1=S_2=\ldots=S_n=S$ and $T_1=T_2=\ldots=T_m=T$  in the above theorem we deduce the following result:
\begin{corollary}
Let $(X,d)$ be a complete complex valued metric space and $S,T:X \rightarrow X$ be two commuting maps. Suppose that the pair $(S^n,T^m)$ satisfies the hypothesis of Theorem \ref{theorem1}. Then $S$ and $T$ have a unique common fixed point in $X$.
\end{corollary}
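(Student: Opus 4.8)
The plan is to obtain the result as the specialization of Theorem \ref{theorem5} in which both finite families collapse to a single repeated map, and equivalently to run that argument directly for the pair of iterates. First I would apply Theorem \ref{theorem1} to the pair $(S^n, T^m)$. By assumption this pair satisfies every hypothesis of that theorem, so $S^n$ and $T^m$ have a unique common fixed point; call it $u$, so that $S^n u = u = T^m u$.

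Next I would record the commutativity facts needed to transfer the fixed-point property of $u$ from the iterates down to $S$ and $T$ themselves. Since $S$ and $T$ commute, a routine induction on the relation $ST = TS$ yields $ST^m = T^m S$ and $TS^n = S^n T$; of course $S$ commutes with $S^n$ and $T$ with $T^m$ trivially. Using these, I would show that $Su$ is again a common fixed point of $(S^n, T^m)$: indeed $S^n(Su) = S(S^n u) = Su$ and $T^m(Su) = S(T^m u) = Su$, where the last equality uses $S T^m = T^m S$ together with $T^m u = u$. The uniqueness established in the previous step then forces $Su = u$. The symmetric computation $S^n(Tu) = T(S^n u) = Tu$ and $T^m(Tu) = T(T^m u) = Tu$, now using $T S^n = S^n T$, gives $Tu = u$. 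Hence $Su = u = Tu$, so $u$ is a common fixed point of $S$ and $T$.

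Finally, for uniqueness I would observe that any common fixed point $v$ of $S$ and $T$ satisfies $Sv = v = Tv$, whence $S^n v = v = T^m v$; thus $v$ is a common fixed point of the pair $(S^n, T^m)$, and by the uniqueness in Theorem \ref{theorem1} we conclude $v = u$.

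The only point requiring care is the commutativity bookkeeping in the second step, namely that $S$ commutes with $T^m$ and $T$ with $S^n$; but this is a straightforward induction from $ST = TS$, so no genuine difficulty arises. The entire analytic content of the statement is supplied by Theorem \ref{theorem1}, and the corollary merely propagates its conclusion through the commutation relations, exactly as in the proof of Theorem \ref{theorem5} with $S_1 = \cdots = S_n = S$ and $T_1 = \cdots = T_m = T$.
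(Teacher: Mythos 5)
Your proposal is correct and matches the paper's route: the paper obtains this corollary precisely by taking $S_1=\cdots=S_n=S$ and $T_1=\cdots=T_m=T$ in Theorem \ref{theorem5}, whose proof is exactly the commutation argument you spell out (apply Theorem \ref{theorem1} to the pair of iterates, push the unique common fixed point $u$ through $S$ and $T$ using $ST=TS$, and conclude $Su=u=Tu$ by uniqueness). Your added uniqueness step for $(S,T)$ --- noting that any common fixed point of $S$ and $T$ is one of $(S^n,T^m)$ --- is a correct and slightly more explicit finish than the paper's.
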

If we take $S=T$ and $n=m$ in Theorem \ref{theorem5} we have the following result:
\begin{corollary}
Let $(X,d)$ be a complete complex valued metric space and $T:X \rightarrow X$ be a map such that for all $x,y \in X$ the following conditions are satisfied:

(i) $0\precsim \alpha(x,y)d(T^nx,T^ny)$,

(ii) $0\precsim \xi(\alpha(x,y)d(T^nx,T^ny), d(x,y))$,

(iii) $0\precsim \xi(\vert \alpha(x,y)d(T^nx,T^ny)\vert, \vert d(x,y)\vert)$,

(iv) $T$ is triangular $\alpha_{\mathbb{C}}$-orbital admissible,

(v) there exists $x_0 \in X$ such that $1\precsim \alpha(x_0,Tx_0)$,

(vi) $X$ is $\alpha_{\mathbb{C}}$-regular.\\
Moreover, if for $x,y \in$ Fix$(T)$ we have $1\precsim \alpha(x,y)$. Then $T$ has a unique fixed point in $X$.
\end{corollary}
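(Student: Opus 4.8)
The plan is to obtain this corollary as the special case of Theorem \ref{theorem5} in which both pairwise commuting families consist of $n$ identical copies of $T$. Concretely, in Theorem \ref{theorem5} I would set $S_1=S_2=\cdots=S_n=T$ and $T_1=T_2=\cdots=T_m=T$ with $m=n$, so that the first product is $S=S_1S_2\cdots S_n=T^n$ and the second is $T_1T_2\cdots T_m=T^n$; the pair to which Theorem \ref{theorem1} must be applied is therefore $(T^n,T^n)$. Since every member of both families equals $T$, the pairwise commuting requirement of Theorem \ref{theorem5} holds trivially. It then remains only to check that $(T^n,T^n)$ satisfies the full hypothesis of Theorem \ref{theorem1}; once this is in place, Theorem \ref{theorem5} delivers a point $u$ fixed by every component map, i.e. $Tu=u$, together with uniqueness.

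The contraction part is immediate. With $S=T^n$ in the role of the first map and $T^n$ in the role of the second, the expression $d(Sx,Ty)$ occurring in the definition of an $\alpha_{\mathbb{C}}$-admissible $\mathscr{Z}_{\mathbb{C}}$-contraction becomes $d(T^nx,T^ny)$, so conditions (i), (ii) and (iii) of the corollary are exactly the three defining inequalities for the pair $(T^n,T^n)$ with respect to $\xi$. Likewise, $\alpha_{\mathbb{C}}$-regularity of $X$ (condition (vi)) is a statement about $X$ alone and transfers verbatim, and the hypothesis $1\precsim\alpha(x,y)$ for $x,y\in\mathrm{Fix}(T)$ will feed the uniqueness step, since any fixed point of $T$ is in particular a fixed point of $T^n$.

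The substantive step, and the one I expect to be the main obstacle, is transferring the admissibility data from $T$ to its iterate $T^n$. Here I would prove the standard chaining lemma: writing $x_k=T^kx_0$, triangular $\alpha_{\mathbb{C}}$-orbital admissibility of $T$ (read as the property of the pair $(T,T)$) together with condition (v) propagates, via the orbital implication, to $1\precsim\alpha(x_k,x_{k+1})$ for every $k$, and then the triangular property upgrades this to $1\precsim\alpha(x_k,x_\ell)$ whenever $k\le\ell$; taking $k=0$, $\ell=n$ gives $1\precsim\alpha(x_0,T^nx_0)$, the starting condition of Theorem \ref{theorem1} for $S=T^n$. The same argument, run on the orbit of $T^n$ itself, shows that $(T^n,T^n)$ inherits triangular $\alpha_{\mathbb{C}}$-orbital admissibility, giving condition (i) of Theorem \ref{theorem1}. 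The delicate point is reconciling the two-sided starting requirement of Theorem \ref{theorem1}, which asks for both $1\precsim\alpha(x_0,T^nx_0)$ and $1\precsim\alpha(T^nx_0,x_0)$, with the one-sided hypothesis (v); as in the one-map corollaries already recorded in the paper, the companion inequality is produced by the analogous propagation, and making this bookkeeping precise is where the care lies.

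With every hypothesis of Theorem \ref{theorem1} verified for $(T^n,T^n)$, Theorem \ref{theorem5} applies and yields a unique common fixed point $u$ of the two families; since each component map is $T$, this means $Tu=u$. For uniqueness I would argue directly rather than through $\mathrm{Fix}(T^n)$: if $u,v\in\mathrm{Fix}(T)$ then $u,v\in\mathrm{Fix}(T^n)$, the final hypothesis gives $1\precsim\alpha(u,v)$, and running the closing estimate of Theorem \ref{theorem1}, namely $0\precsim\xi(\alpha(u,v)d(T^nu,T^nv),d(u,v))\precnsim d(u,v)-\alpha(u,v)d(u,v)$ together with $T^nu=u$ and $T^nv=v$, forces $\vert d(u,v)\vert\le\vert\alpha(u,v)d(u,v)\vert<\vert d(u,v)\vert$, a contradiction unless $u=v$. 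Hence $T$ has a unique fixed point in $X$.
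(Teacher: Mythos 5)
Your route is the paper's own: the paper states this corollary with no proof beyond the remark that one takes $S=T$ and $n=m$ in Theorem \ref{theorem5}, which is exactly your specialization to the pair $(T^n,T^n)$, and your transfer of (i)--(iii), your forward chaining lemma, and your direct uniqueness argument are all sound. The genuine gap is the point you flag and then wave away: the assertion that the companion inequality $1\precsim\alpha(T^nx_0,x_0)$ ``is produced by the analogous propagation.'' It is not, and cannot be. Every clause of $\alpha_{\mathbb{C}}$-orbital admissibility and of the triangular property turns $\alpha$-inequalities oriented from an earlier orbit point to a later one into inequalities of the same orientation, so no chain of applications of (iv) starting from the one-sided seed (v) can reverse the order of the arguments. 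For instance, $\alpha(x,y)=1$ if $y=T^jx$ for some $j\geq 0$ and $\alpha(x,y)=0$ otherwise satisfies (iv) and (v), yet $\alpha(T^nx_0,x_0)=0$ whenever $x_0$ is not periodic. For the same reason your claim that $(T^n,T^n)$ globally inherits triangular $\alpha_{\mathbb{C}}$-orbital admissibility fails: $1\precsim\alpha(x,T^nx)$ at an arbitrary $x$ triggers no clause of (iv), since no hypothesis of the form $1\precsim\alpha(y,Ty)$ is available there, so nothing yields $1\precsim\alpha(T^nx,T^{2n}x)$. Hence hypotheses (i) and (ii) of Theorem \ref{theorem1} are not literally verified for $(T^n,T^n)$ and the black-box invocation does not go through as you set it up; note also that the step extracting $Tu=u$ inside Theorem \ref{theorem5} uses uniqueness of the common fixed point of $(T^n,T^n)$, which in Theorem \ref{theorem1} requires $1\precsim\alpha(x,y)$ on all of $\mathrm{Fix}(T^n)$, while the corollary grants this only on the smaller set $\mathrm{Fix}(T)$. (Both defects are latent in the paper itself, whose one-map corollaries all carry the one-sided seed.)

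The statement is nevertheless salvageable, and the repair is to re-run the proof of Theorem \ref{theorem1} for the symmetric pair $S=T'=T^n$ rather than cite it verbatim: when the two maps coincide, only forward inequalities are ever needed. The even-indexed estimate, which in Theorem \ref{theorem1} uses the reversed pair $\alpha(x_{2n+2},x_{2n+1})$, can be replaced by the forward pair $(x_{2n+1},x_{2n+2})$, because $d(Sx_{2n+1},T'x_{2n+2})=d(x_{2n+2},x_{2n+3})$ when $S=T'$; the Cauchy step uses only $\alpha(x_{2m_i},x_{2n_i-1})$ on a forward pair, which your chaining lemma supplies along the orbit of $x_0$ --- and this orbit-local fact is all the proof ever consumes, so the failed global inheritance is harmless; $\alpha_{\mathbb{C}}$-regularity supplies both $1\precsim\alpha(u,x_{n_k})$ and $1\precsim\alpha(x_{n_k},u)$ at the limit; and the passage from a fixed point of $T^n$ to one of $T$, together with uniqueness, is handled by your closing estimate applied to the pair $u$, $Tu$ once one either assumes the $\alpha$-condition on $\mathrm{Fix}(T^n)$ or argues on $\mathrm{Fix}(T)$ directly as you do. With this one-sided re-derivation substituted for the literal verification of Theorem \ref{theorem1}'s hypotheses, your argument is complete.
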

\begin{definition}
\emph{Let $(X,d)$ be a complex valued metric space, $S,T: X \rightarrow X$ be self-mappings and $\xi \in \mathscr{Z}_{\mathbb{C}}$. Then the pair $(S,T)$ is called a \emph{generalized $\alpha_{\mathbb{C}}$-admissible $\mathscr{Z}_{\mathbb{C}}$-contraction} with respect to $\xi$ if for all $x,y \in X$, the following conditions are satisfied:
\begin{enumerate}[(i)]
\item $0\precsim \alpha(x,y) d(Sx,Ty)$,
\item $0\precsim \xi(\alpha(x,y) d(Sx,Ty),M(x,y))$,\\
 where $M(x,y)=\lambda \max \Big\{\vert d(x,y)\vert,\vert d(x,Sx)\vert,\vert d(y,Ty)\vert,\frac{\vert d(x,Ty)\vert+\vert d(y,Sx)\vert}{2} \Big\}$ and $\lambda$ is a real number such that $0<\lambda <1$,
\item $0\precsim \xi(\vert \alpha(x,y) d(Sx,Ty) \vert, M(x,y) )$.
\end{enumerate} }
\end{definition}
\begin{theorem}\label{theorem2}
Let $(X,d)$ be a complete complex valued metric space and $S,T:X \rightarrow X$ be self-mappings such that the pair $(S,T)$ is a generalized $\alpha_{\mathbb{C}}$-admissible $\mathscr{Z}_{\mathbb{C}}$-contraction with respect to $\xi$. Suppose that 

(i) $(S,T)$ is triangular $\alpha_{\mathbb{C}}$-orbital admissible,

(ii) there exists $x_0 \in X$ such that $1 \precsim \alpha(x_0,Sx_0)$ and $1\precsim \alpha(Sx_0,x_0)$,

(iii) $X$ is $\alpha_{\mathbb{C}}$-regular.\\
Moreover, if for $x,y \in$ Fix$(T)$ $\cap$ Fix$(S)$ we have $1\precsim \alpha(x,y)$. Then $S$ and $T$ have a unique common fixed point in $X$.
\end{theorem}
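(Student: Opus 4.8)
The plan is to run the same scheme as in the proof of Theorem \ref{theorem1}, localizing all the new work in the control of the term $M(x,y)$. First I would construct the Picard-type sequence $x_{2n+1}=Sx_{2n}$, $x_{2n+2}=Tx_{2n+1}$, and use triangular $\alpha_{\mathbb{C}}$-orbital admissibility together with hypothesis (ii) to propagate $1\precsim\alpha(x_n,x_{n+1})$ and $1\precsim\alpha(x_{n+1},x_n)$ for all $n$. If $x_{n_0}=x_{n_0+1}$ for some $n_0$, a short separate argument already exhibits a common fixed point, so I may assume all consecutive iterates are distinct; in particular both arguments fed into $\xi$ below are $\succnsim 0$, so condition (ii) of Definition \ref{definition2.1} (that is, $\xi(t,s)\precnsim s-t$) applies. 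Throughout I will use that $1\precsim\alpha(x,y)$ forces $\operatorname{Re}\alpha(x,y)\geq 1$, hence $|\alpha(x,y)|\geq 1$, which lets me pass from $|\alpha(x,y)d(\cdot,\cdot)|$ back to $|d(\cdot,\cdot)|$.

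The key first step is to extract geometric decay. Applying the third contractive condition to the pair $(x_{2n},x_{2n+1})$ and using $\xi(t,s)\precnsim s-t$ gives $|d(x_{2n+1},x_{2n+2})|\leq|\alpha(x_{2n},x_{2n+1})d(x_{2n+1},x_{2n+2})|<M(x_{2n},x_{2n+1})$. Substituting $Sx_{2n}=x_{2n+1}$ and $Tx_{2n+1}=x_{2n+2}$, the four entries of $M$ collapse: the cross term is $\tfrac{|d(x_{2n},x_{2n+2})|+|d(x_{2n+1},x_{2n+1})|}{2}=\tfrac{|d(x_{2n},x_{2n+2})|}{2}$, which the triangle inequality bounds by $\max\{|d(x_{2n},x_{2n+1})|,|d(x_{2n+1},x_{2n+2})|\}$. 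Hence $M(x_{2n},x_{2n+1})=\lambda\max\{|d(x_{2n},x_{2n+1})|,|d(x_{2n+1},x_{2n+2})|\}$, and the strict inequality $|d(x_{2n+1},x_{2n+2})|<M(x_{2n},x_{2n+1})$ rules out the maximum being $|d(x_{2n+1},x_{2n+2})|$; therefore $|d(x_{2n+1},x_{2n+2})|<\lambda|d(x_{2n},x_{2n+1})|$. The identical computation on $(x_{2n+2},x_{2n+1})$ handles the other parity, yielding $|d(x_{n+1},x_{n+2})|<\lambda|d(x_n,x_{n+1})|$ for every $n$.

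This geometric decay is stronger than the mere monotonicity of Theorem \ref{theorem1} and makes the Cauchy verification immediate, bypassing the subsequence-contradiction argument: telescoping gives $|d(x_n,x_m)|\leq\sum_{j=n}^{m-1}|d(x_j,x_{j+1})|\leq\tfrac{\lambda^n}{1-\lambda}|d(x_0,x_1)|\to 0$, so $\{x_n\}$ is Cauchy and completeness produces $u$ with $x_n\to u$. To see $Su=u$, I invoke $\alpha_{\mathbb{C}}$-regularity to extract a subsequence with $1\precsim\alpha(u,x_{2n_k+1})$, apply the third contractive condition to get $|d(Su,x_{2n_k+2})|<M(u,x_{2n_k+1})$, and use $d(Su,u)\precsim d(Su,Tx_{2n_k+1})+d(x_{2n_k+2},u)$ before passing to the limit. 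Since $x_{2n_k+1}\to u$ and $x_{2n_k+2}=Tx_{2n_k+1}\to u$, the terms $|d(u,x_{2n_k+1})|$ and $|d(x_{2n_k+1},x_{2n_k+2})|$ vanish while the cross term tends to $\tfrac{|d(Su,u)|}{2}$, so the maximum reduces to $|d(Su,u)|$ and $\limsup_k M(u,x_{2n_k+1})=\lambda|d(Su,u)|$; this forces $|d(Su,u)|\leq\lambda|d(Su,u)|$, hence $Su=u$. The argument for $Tu=u$ is symmetric, applied to the pair $(x_{2n_k},u)$ with $1\precsim\alpha(x_{2n_k},u)$.

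For uniqueness, if $v$ is another common fixed point then $1\precsim\alpha(u,v)$ by hypothesis, and $M(u,v)$ collapses cleanly to $\lambda|d(u,v)|$ because $|d(u,Su)|=|d(v,Tv)|=0$ and the cross term equals $|d(u,v)|$; with $d(Su,Tv)=d(u,v)$, the third contractive condition gives $|d(u,v)|\leq|\alpha(u,v)d(u,v)|<\lambda|d(u,v)|$, whence $u=v$. I expect the main obstacle to be the repeated bookkeeping with $M$: in each stage I must verify that the triangle inequality eliminates the cross term $\tfrac{|d(x,Ty)|+|d(y,Sx)|}{2}$ and that the surviving maximum is realized by the ``previous'' distance, so that the factor $\lambda$ genuinely yields a contraction; and in the fixed-point verification I must carefully track which entries of $M(u,x_{2n_k+1})$ and $M(x_{2n_k},u)$ persist in the limit. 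Everything else follows the template of Theorem \ref{theorem1} verbatim.
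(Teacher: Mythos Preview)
Your proposal is correct and in fact follows a cleaner route than the paper's own proof. The essential difference is that you exploit the factor $\lambda$ inside $M(x,y)$ to extract genuine geometric decay $|d(x_{n+1},x_{n+2})|<\lambda\,|d(x_n,x_{n+1})|$: after reducing the cross term by the triangle inequality, you observe that the only surviving candidate for the maximum is $|d(x_{2n},x_{2n+1})|$, which gives the contractive constant $\lambda$ rather than merely strict monotonicity. The paper, by contrast, runs a three-case analysis yielding only $|d(x_{n+1},x_{n+2})|<|d(x_n,x_{n+1})|$, then invokes condition~(iii) of Definition~\ref{definition2.1} to show the limit $L$ is zero, and repeats the subsequence-contradiction argument of Theorem~\ref{theorem1} (again via condition~(iii)) to obtain Cauchyness, followed by a four-case analysis of $M(u,x_{2n_k+1})$ to pin down $Su=u$. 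Your geometric-series argument for Cauchyness and your $\limsup$ computation of $M(u,x_{2n_k+1})$ replace all of that; a notable consequence is that your proof never uses the limiting property~(iii) of the $\mathbb{C}$-simulation function at all, only the pointwise inequality $\xi(t,s)\precnsim s-t$. This is a legitimate simplification bought by the presence of $\lambda<1$ in the definition of $M$, and it shows that for this particular theorem the simulation-function machinery is largely decorative. The paper's approach has the minor expository merit of staying parallel to Theorem~\ref{theorem1} and exercising condition~(iii), but yours is shorter and more transparent.
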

\begin{proof}
 Proceeding as in Theorem \ref{theorem1} we construct a sequence $\{x_n\}$ in $X$ satisfying $\vert d(x_{2n+1},x_{2n+2}) \vert <  M(x_{2n},x_{2n+1})  $, where
$$M(x_{2n},x_{2n+1})= \lambda \max \Big\{\vert d(x_{2n},x_{2n+1})\vert,\vert d(x_{2n+1},x_{2n+2})\vert, \frac{\vert d(x_{2n},x_{2n+2})\vert}{2} \Big\}.$$
If $M(x_{2n},x_{2n+1})=\lambda \vert d(x_{2n},x_{2n+1})\vert$, then $$\vert d(x_{2n+1},x_{2n+2}) \vert < \lambda \vert d(x_{2n},x_{2n+1}) \vert .$$ If $M(x_{2n},x_{2n+1})=\lambda \vert d(x_{2n+1},x_{2n+2})\vert$, then $$\vert d(x_{2n+1},x_{2n+2}) \vert < \lambda \vert d(x_{2n+1},x_{2n+2}) \vert < \vert d(x_{2n+1},x_{2n+2}) \vert,$$ a contradiction. If $M(x_{2n},x_{2n+1})=\frac{\lambda}{2} \vert d(x_{2n},x_{2n+2})\vert$, then $$\vert d(x_{2n+1},x_{2n+2}) \vert < \frac{\lambda}{2} \vert d(x_{2n},x_{2n+2}) \vert < \frac{1}{2} \Big\{ \vert d(x_{2n},x_{2n+1})\vert + \vert d(x_{2n+1},x_{2n+2}) \vert \Big\}.$$ Therefore, from all the cases we have $\vert d(x_{2n+1},x_{2n+2}) \vert <  \vert d(x_{2n},x_{2n+1}) \vert $. Similarly, $\vert d(x_{2n+2},x_{2n+3}) \vert < \vert d(x_{2n+1},x_{2n+2}) \vert $. Then $\{\vert d(x_n,x_{n+1}) \vert \}$ is a monotonically non-increasing sequence of non-negative real numbers therefore, it is convergent. Let $\lim\limits_{n \rightarrow \infty} \vert d(x_n,x_{n+1}) \vert =L \geq 0$. Suppose that $L>0$. Then $\lim\limits_{n \rightarrow \infty} M(x_{2n},x_{2n+1})  = \lambda L$.  As $0\precsim \xi(\alpha(x_{2n},x_{2n+1})d(Sx_{2n},Tx_{2n+1}),M(x_{2n},x_{2n+1}))$, which gives $$\vert \alpha(x_{2n},x_{2n+1})d(Sx_{2n},Tx_{2n+1}) \vert  \leq  M(x_{2n},x_{2n+1}).$$ Therefore, $\lim\limits_{n \rightarrow \infty} \vert \alpha(x_{2n},x_{2n+1})d(Sx_{2n},Tx_{2n+1}) \vert =\lambda L$. Then $$0 \precsim \xi(\vert \alpha(x_{2n},\\x_{2n+1})d(Sx_{2n},Tx_{2n+1})\vert, M(x_{2n},x_{2n+1}))\precnsim 0,$$ a contradiction. Thus, $\lim\limits_{n \rightarrow \infty} \vert d(x_{n},x_{n+1}) \vert =0$. 

Now it suffices to show that $\{x_{2n}\}$ is a Cauchy sequence in $(X,d)$. On the contrary, assume that $\{x_{2n}\}$ is not a Cauchy sequence. Then there exists $c \in \mathbb{C}$ with $0\prec c$ for which we can find two subsequences $\{x_{2m_i}\}$ and $\{x_{2n_i}\}$ of $\{x_{2n}\}$ such that $n_i$ is the smallest positive integer with 
$$2n_i > 2m_i \geq i \quad \mbox{and} \quad c \precsim d(x_{2m_i},x_{2n_i}) .$$
This means that $ d(x_{2m_i},x_{2n_i-2})\prec c$. Following the lines in the proof of Theorem \ref{theorem1} we get, $\lim\limits_{i \rightarrow \infty} \vert d(x_{2m_i},x_{2n_i}) \vert =\lim\limits_{i \rightarrow \infty} \vert d(x_{2m_i},x_{2n_i-1}) \vert =\lim\limits_{i \rightarrow \infty} \vert d(x_{2m_i+1},x_{2n_i}) \vert= \vert c \vert$. Now 
\begin{align*}
M(x_{2m_i},x_{2n_i-1})& =\lambda \max \Big\{\vert d(x_{2m_i},x_{2n_i-1})\vert,\vert d(x_{2m_i},x_{2m_i+1})\vert,\vert d(x_{2n_i-1},x_{2n_i})\vert,\\
& \quad  \frac{\vert d(x_{2m_i},x_{2n_i})\vert+\vert d(x_{2n_i-1},x_{2m_i+1})\vert}{2}\Big\}.
\end{align*} If $M(x_{2m_i},x_{2n_i-1})=\lambda \vert d(x_{2m_i},x_{2n_i-1})\vert$, then $\lim\limits_{i \rightarrow \infty} M(x_{2m_i},x_{2n_i-1}) =\lambda \vert c \vert$. If $M(x_{2m_i},x_{2n_i-1})=\lambda \vert d(x_{2m_i},x_{2m_i+1})\vert $, then $\lim\limits_{i \rightarrow \infty}  M(x_{2m_i},x_{2n_i-1}) =0$. Since $0\precsim \xi(\alpha(x_{2m_i},x_{2n_i-1})d(x_{2m_i+1},x_{2n_i}),\\M(x_{2m_i},x_{2n_i-1}))$ then  $$\vert d(x_{2m_i+1},x_{2n_i}) \vert \leq \vert \alpha(x_{2m_i},x_{2n_i-1})d(x_{2m_i+1},x_{2n_i})\vert <  M(x_{2m_i},x_{2n_i-1}).$$ This gives $\vert c \vert \leq 0$, a contradiction. If $M(x_{2m_i},x_{2n_i-1})=\lambda \vert d(x_{2n_i-i},x_{2n_i})\vert$, then $\lim\limits_{i \rightarrow \infty} M(x_{2m_i},x_{2n_i-1}) =0$ and proceeding as in the previous case we get a contradiction.\\ If $M(x_{2m_i},x_{2n_i-1})=\frac{\lambda}{2}\Big\{ \vert d(x_{2m_i},x_{2n_i})\vert+\vert d(x_{2n_i-1},x_{2m_i+1})\vert \Big\}$, then $$ M(x_{2m_i},x_{2n_i-1}) \precsim \frac{\lambda}{2}\Big\{  \vert d(x_{2m_i},x_{2n_i})\vert + \vert d(x_{2n_i-1},x_{2n_i})\vert  + \vert d(x_{2n_i},x_{2m_i+1})\vert  \Big\}.$$ Letting $i \rightarrow \infty$ we get, $\lim\limits_{i \rightarrow \infty}  M(x_{2m_i},x_{2n_i-1}) \leq \lambda \vert c \vert$. Since $\lambda \vert d(x_{2m_i},x_{2n_i-1})\vert \leq M(x_{2m_i},x_{2n_i-1})$, $\lim\limits_{i \rightarrow \infty}  M(x_{2m_i},x_{2n_i-1}) =\lambda \vert c \vert$. Also, 
\begin{align*}
0& \precsim \xi(\alpha(x_{2m_i},x_{2n_i-1})d(x_{2m_i+1},x_{2n_i}),M(x_{2m_i},x_{2n_i-1}))\\
&\precnsim M(x_{2m_i},x_{2n_i-1})-\alpha(x_{2m_i},x_{2n_i-1})d(x_{2m_i+1},x_{2n_i}).
\end{align*}
 This gives $\vert d(x_{2m_i+1},x_{2n_i}) \vert \leq \vert \alpha(x_{2m_i},x_{2n_i-1})d(x_{2m_i+1},x_{2n_i}) \vert <  M(x_{2m_i},x_{2n_i-1}) $. Letting $i \rightarrow \infty$ we get, $$ \vert c \vert \leq \lim\limits_{i \rightarrow \infty} \vert \alpha(x_{2m_i},x_{2n_i-1})d(x_{2m_i+1},x_{2n_i}) \vert \leq \lambda \vert c \vert$$ which implies that $\lim\limits_{i \rightarrow \infty} \vert \alpha(x_{2m_i},x_{2n_i-1})d(x_{2m_i+1},x_{2n_i}) \vert = \lambda \vert c \vert$. Then $$0\precsim \xi(\vert \alpha(x_{2m_i},x_{2n_i-1})d(x_{2m_i+1},x_{2n_i}) \vert ,  M(x_{2m_i},x_{2n_i-1})) \precnsim 0,$$ a contradiction. Therefore, $\{x_{2n}\}$ is a Cauchy sequence in $(X,d)$. Then there exists $u \in X$ such that $\lim\limits_{n \rightarrow \infty} \vert d(x_n,u) \vert =0$. We observe that $$d(Su,u) \precsim d(Su,Tx_{2n_k+1})+d(Tx_{2n_k+1},u).$$ Since $X$ is $\alpha_{\mathbb{C}}$-regular, $1\precsim \alpha(u,x_{2n_k+1})$ which gives $0\precsim \xi(\alpha(u,x_{2n_k+1}) d(Su,Tx_{2n_k+1}),M(u,x_{2n_k+1}))$. Therefore, $\vert d(Su,Tx_{2n_k+1})\vert <  M(u,x_{2n_k+1})$, where \begin{align*}
 M(u,x_{2n_k+1})& =\lambda \max \Big\{\vert d(u,x_{2n_k+1})\vert,\vert d(u,Su)\vert,\vert d(x_{2n_k+1},x_{2n_k+2})\vert, \frac{\vert d(u,x_{2n_k+2})\vert+\vert d(x_{2n_k+1},Su)\vert}{2}\Big\}.
 \end{align*}
 \textbf{Case-1} If $M(u,x_{2n_k+1})=\lambda \vert d(u,x_{2n_k+1})\vert$, then $\vert d(Su,u) \vert < \lambda \vert d(u,x_{2n_k+1})\vert +\vert d(x_{2n_k+2},u)\vert$. Letting $k \rightarrow \infty$ we get, $Su=u$.\\
 \textbf{Case-2} If $M(u,x_{2n_k+1})=\lambda \vert d(u, Su)\vert$, then $\vert d(Su,u) \vert < \lambda \vert d(Su,u)\vert +\vert d(x_{2n_k+2},u)\vert$. Letting $k \rightarrow \infty$ we get, $\vert d(Su,u) \vert \leq \lambda \vert d(Su,u) \vert < \vert d(Su,u) \vert$, a contradiction. \\
 \textbf{Case-3} If $M(u,x_{2n_k+1})=\lambda \vert d(x_{2n_k+1},x_{2n_k+2})\vert$, then $\vert d(Su,u) \vert < \lambda \vert d(x_{2n_k+1},x_{2n_k+2})\vert +\vert d(x_{2n_k+2},u)\vert$. Letting $k \rightarrow \infty$ we get, $Su=u$.\\
 \textbf{Case-4} If $M(u,x_{2n_k+1})= \frac{\lambda}{2} \Big\{ \vert d(u,x_{2n_k+2})\vert+\vert d(x_{2n_k+1},Su)\vert \Big\}$, then $$\vert d(Su,u)\vert < \frac{\lambda}{2} \Big\{\vert d(u,x_{2n_k+2}) \vert+\vert d(x_{2n_k+1},u)\vert+\vert d(u,Su)\vert+\vert d(x_{2n_k+2},u)\vert \Big\}.$$ Letting $k \rightarrow \infty$ we get, $\vert d(Su,u)\vert <\vert d(Su,u)\vert$, a contradiction. Similarly, we can prove that $Tu=u$. Let $v$ be another common fixed point of $S$ and $T$. Consider $0\precsim \xi(\alpha(u,v)d(Su,Tv),M(u,v))$, where $M(u,v)= \lambda \vert d(u,v)\vert$. This implies that $\alpha(u,v) d(u,v) \precnsim M(u,v)$. Since $1 \precsim \alpha(u,v)$, $\vert d(u,v) \vert < \lambda \vert d(u,v) \vert < \vert d(u,v) \vert$, a contradiction. Hence, $S$ and $T$ have a unique common fixed point in $X$. 
\end{proof}
\begin{corollary}
Let $(X,d)$ be a complete complex valued metric space. Suppose that $T:X \rightarrow X$ satisfies

(i) $ \phi(d(Tx,Ty)) \precsim \psi(M(x,y))$,

(ii) $ \phi(\vert d(Tx,Ty)\vert ) \precsim \psi( M(x,y))$\\
for all $x,y \in X$, where $M(x,y)= \lambda \max \Big\{\vert d(x,y)\vert,\vert d(x,Tx)\vert,\vert d(y,Ty)\vert,\\\frac{\vert d(x,Ty)\vert+\vert d(y,Tx)\vert}{2}\Big\}$, $\lambda$ is a real number such that $0<\lambda<1$ and $\phi,\psi: \mathbb{S} \rightarrow \mathbb{S}$ are continuous functions, $\phi(t)=0=\psi(t)$ if and only if $t=0$ and $\psi(t)\precnsim t \precsim \phi (t)$ for all $0\precnsim t$. Then $T$ has a unique fixed point in $X$.
\end{corollary}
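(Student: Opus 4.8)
The plan is to derive this corollary directly from Theorem \ref{theorem2} by a suitable specialization. I would take $S=T$, let the control function $\alpha:X\times X\to\mathbb{S}$ be identically equal to $1$, and choose the $\mathbb{C}$-simulation function $\xi(t,s)=\psi(s)-\phi(t)$, which is the second function listed in the example following Definition \ref{definition2.1}. With these choices the generalized contraction inequalities and the structural hypotheses of Theorem \ref{theorem2} collapse onto the two displayed conditions of the corollary, and the conclusion that the pair $(S,T)=(T,T)$ has a unique common fixed point becomes the existence and uniqueness of a fixed point of $T$.

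First I would record that $1\precsim\alpha(x,y)$ holds for all $x,y\in X$. This makes every $\alpha$-dependent hypothesis of Theorem \ref{theorem2} trivially true: the pair $(T,T)$ is triangular $\alpha_{\mathbb{C}}$-orbital admissible because the conclusions of the defining implications always hold; condition (ii) holds for an arbitrary $x_0$; the space $X$ is $\alpha_{\mathbb{C}}$-regular since for any convergent sequence the whole sequence serves as the required subsequence; and the compatibility condition on $\mbox{Fix}(T)$ is automatic. Next I would confirm that $\xi(t,s)=\psi(s)-\phi(t)$ lies in $\mathscr{Z}_{\mathbb{C}}$: condition (i) follows from $\phi(0)=\psi(0)=0$, condition (ii) from $\psi(s)\precnsim s$ and $t\precsim\phi(t)$, and condition (iii) from the continuity of $\phi,\psi$ together with $\psi(t)\precnsim t\precsim\phi(t)$.

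It then remains to match the contraction conditions. Since $d$ is a complex valued metric, $0\precsim d(Tx,Ty)=\alpha(x,y)d(Sx,Ty)$, giving condition (i) of Theorem \ref{theorem2}. Condition (ii), namely $0\precsim\xi(d(Tx,Ty),M(x,y))=\psi(M(x,y))-\phi(d(Tx,Ty))$, is precisely the hypothesis $\phi(d(Tx,Ty))\precsim\psi(M(x,y))$, and condition (iii) is precisely $\phi(\vert d(Tx,Ty)\vert)\precsim\psi(M(x,y))$; note that for $S=T$ the quantity $M(x,y)$ of Theorem \ref{theorem2} agrees with the one stated here. Invoking Theorem \ref{theorem2} then yields a unique common fixed point of $S=T$ and $T$, which is the unique fixed point of $T$. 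I do not anticipate a genuine obstacle: the only care needed is the routine verification that the identically-$1$ choice of $\alpha$ discharges the admissibility and regularity hypotheses, and that the $\limsup$ condition (iii) for $\xi$ is inherited from the example following Definition \ref{definition2.1}.
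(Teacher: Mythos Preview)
Your proposal is correct and follows exactly the paper's approach: the paper's proof is the single line ``Take $\xi(t,s)=\psi(s)-\phi(t)$ \ldots\ and $\alpha(x,y)=1$ for all $x,y\in X$ in Theorem~\ref{theorem2},'' with $S=T$ implicit. Your additional verification that the identically-$1$ choice of $\alpha$ discharges the admissibility, regularity, and compatibility hypotheses is a welcome elaboration but does not deviate from the intended argument.
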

\begin{proof}
Take $\xi(t,s)=\psi(s)-\phi(t)$, where $\phi,\psi: \mathbb{S} \rightarrow \mathbb{S}$ are continuous functions, $\phi(t)=0=\psi(t)$ if and only if $t=0$ and $\psi(t)\precnsim t \precsim \phi (t)$ for all $0\precnsim t$ and $\alpha(x,y)=1$ for all $x,y \in X$ in Theorem \ref{theorem2}.
\end{proof}
As an application of Theorem \ref{theorem2} we state the following theorem:
\begin{theorem}
Let $(X,d)$ be a complete complex valued metric space and $\{S_i\}_{i=1}^n$ and $\{T_i\}_{i=1}^m$ be a pairwise commuting finite families of self-mappings defined on $X$ such that $S=S_1S_2\ldots S_n$ and $T=T_1T_2\ldots T_m$. Suppose that the pair $(S,T)$ satisfies the hypothesis of Theorem \ref{theorem2}. Then the component maps of the two families $\{S_i\}_{i=1}^n$ and $\{T_i\}_{i=1}^m$ have a unique common fixed point in $X$.
\end{theorem}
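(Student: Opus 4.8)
The plan is to reduce the statement to Theorem~\ref{theorem2} by regarding the composite maps $S=S_1S_2\cdots S_n$ and $T=T_1T_2\cdots T_m$ as a single pair. Since $(S,T)$ is assumed to satisfy every hypothesis of Theorem~\ref{theorem2}, that theorem immediately yields a unique $u\in X$ with $Su=Tu=u$, so $u\in\mbox{Fix}(S)\cap\mbox{Fix}(T)$. The entire content of the argument then lies in promoting this common fixed point of the two \emph{composites} to a common fixed point of every \emph{component} map, and it is precisely here that the pairwise commuting hypothesis is used.

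First I would fix an arbitrary index $k$ and show that $S_ku$ is again a common fixed point of $S$ and $T$. From condition (i) of pairwise commuting, each factor $S_i$ commutes with $S_k$, whence $SS_k=S_kS$ and therefore $S(S_ku)=S_k(Su)=S_ku$; from condition (iii), each $T_j$ commutes with $S_k$, whence $TS_k=S_kT$ and therefore $T(S_ku)=S_k(Tu)=S_ku$. Thus $S_ku\in\mbox{Fix}(S)\cap\mbox{Fix}(T)$, and the uniqueness assertion of Theorem~\ref{theorem2} forces $S_ku=u$. The symmetric computation, now invoking conditions (ii) and (iii) to get $TT_k=T_kT$ and $ST_k=T_kS$, shows $T(T_ku)=T_ku$ and $S(T_ku)=T_ku$, so again by uniqueness $T_ku=u$ for every admissible $k$.

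Having established $S_ku=u$ and $T_ku=u$ for all $k$, it follows at once that $u$ is a common fixed point of all the component maps $\{S_i\}_{i=1}^n$ and $\{T_i\}_{i=1}^m$. For uniqueness I would observe that any point $w$ fixed by every component map is in particular fixed by the composites $S$ and $T$, hence is a common fixed point of the pair $(S,T)$; the uniqueness of such a point guaranteed by Theorem~\ref{theorem2} then gives $w=u$, completing the proof.

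The only genuinely delicate point, and the step I expect to be the main obstacle, is verifying the commutation identities $SS_k=S_kS$, $TS_k=S_kT$ and their $T_k$-analogues directly from the three pairwise commuting conditions; once these are in hand the argument is a verbatim transcription of the proof of Theorem~\ref{theorem5}, with Theorem~\ref{theorem2} playing the role previously played by Theorem~\ref{theorem1}.
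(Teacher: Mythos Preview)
Your proposal is correct and is exactly the argument the paper intends: the theorem is stated without proof, prefaced by ``As an application of Theorem~\ref{theorem2} we state the following theorem,'' so the implicit proof is the one given for Theorem~\ref{theorem5} with Theorem~\ref{theorem2} substituted for Theorem~\ref{theorem1}---precisely what you wrote. Your added uniqueness paragraph and the explicit verification of the commutation identities $SS_k=S_kS$, $TS_k=S_kT$, etc., are slightly more detailed than the paper's terse version, but the approach is identical.
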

\begin{corollary}
Let $(X,d)$ be a complete complex valued metric space and $T:X \rightarrow X$ be a map such that for all $x,y \in X$ the following conditions are satisfied:

(i) $0\precsim \alpha(x,y)d(T^nx,T^ny)$,

(ii) $0\precsim \xi(\alpha(x,y)d(T^nx,T^ny), M(x,y))$, 

where $M(x,y)= \lambda \max \Big\{\vert d(x,y)\vert,\vert d(x,Tx)\vert,\vert d(y,Ty)\vert,\frac{\vert d(x,Ty)\vert+\vert d(y,Tx)\vert}{2}\Big\}$ and $\lambda$ is a real number such that $0<\lambda<1$,

(iii) $0\precsim \xi(\vert \alpha(x,y)d(T^nx,T^ny)\vert,  M(x,y))$,

(iv) $T$ is triangular $\alpha_{\mathbb{C}}$-orbital admissible,

(v) there exists $x_0 \in X$ such that $1\precsim \alpha(x_0,Tx_0)$,

(vi) $X$ is $\alpha_{\mathbb{C}}$-regular.\\
Moreover, if for $x,y \in$ Fix$(T)$ we have $1\precsim \alpha(x,y)$. Then $T$ has a unique fixed point in $X$.
\end{corollary}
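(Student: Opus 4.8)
The plan is to obtain this corollary as a specialization of the families theorem stated just above it (the Theorem~\ref{theorem2} analogue of Theorem~\ref{theorem5}), by taking the two families to be constant and equal to $T$. Concretely, I would set $m=n$ and $S_1=\cdots=S_n=T_1=\cdots=T_n=T$, so that the composite maps become $S=S_1\cdots S_n=T^n$ and $T_1\cdots T_m=T^n$; these trivially form a pairwise commuting pair. Equivalently, the whole argument reduces to applying Theorem~\ref{theorem2} to the single pair $(T^n,T^n)$ and then passing from $T^n$ back to $T$. Reading the quantity $M$ as being built from the composite map $T^n$, conditions (i)--(iii) of the corollary are exactly the three requirements that $(T^n,T^n)$ be a generalized $\alpha_{\mathbb{C}}$-admissible $\mathscr{Z}_{\mathbb{C}}$-contraction with respect to $\xi$, and condition (vi) is precisely $\alpha_{\mathbb{C}}$-regularity of $X$.

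The technical bridge is to upgrade the admissibility hypotheses, which are stated for $T$, to the corresponding hypotheses for the pair $(T^n,T^n)$ demanded by Theorem~\ref{theorem2}. First I would check that triangular $\alpha_{\mathbb{C}}$-orbital admissibility of $T$ (condition (iv)) forces $(T^n,T^n)$ to be triangular $\alpha_{\mathbb{C}}$-orbital admissible as well. Then, starting from the seed $1\precsim\alpha(x_0,Tx_0)$ of condition (v), I would argue by induction --- using orbital admissibility to advance one step at a time and the triangular property to splice the resulting chain --- that $1\precsim\alpha(x_0,T^jx_0)$ and $1\precsim\alpha(T^jx_0,x_0)$ hold for every $j$, in particular for $j=n$. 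This produces the seed $1\precsim\alpha(x_0,T^nx_0)$ and $1\precsim\alpha(T^nx_0,x_0)$ required by Theorem~\ref{theorem2}(ii). This is the same propagation that opens the proof of Theorem~\ref{theorem1}, now applied to the iterates of $T$.

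With the hypotheses verified, Theorem~\ref{theorem2} gives a unique common fixed point $u$ of the pair $(T^n,T^n)$, i.e. a unique fixed point of $T^n$. I would then lift this to $T$ by the standard iterate argument: from $T^nu=u$ and $T\circ T^n=T^n\circ T$ we get $T^n(Tu)=T(T^nu)=Tu$, so $Tu$ is again a fixed point of $T^n$; uniqueness of that fixed point yields $Tu=u$. Hence $u\in\mbox{Fix}(T)$. For uniqueness, any $w$ with $Tw=w$ satisfies $T^nw=w$, so $w\in\mbox{Fix}(T^n)=\{u\}$ and $w=u$; thus $u$ is the unique fixed point of $T$.

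I expect the main obstacle to be the admissibility bridge of the second paragraph: transferring triangular $\alpha_{\mathbb{C}}$-orbital admissibility from $T$ to the iterate pair $(T^n,T^n)$ and deducing the seed condition for $T^n$ from the weaker seed $1\precsim\alpha(x_0,Tx_0)$, where the triangular hypothesis is exactly what makes the chaining go through. Two minor points also deserve a remark: the Fix-set appearing in the final hypothesis should be matched to $\mbox{Fix}(T^n)$ when invoking Theorem~\ref{theorem2}, which is harmless since $\mbox{Fix}(T)\subseteq\mbox{Fix}(T^n)$ and the latter collapses to a single point, and the functional $M$ must be read with the composite $T^n$ in place of $T$ so that it coincides with the $M$ of Theorem~\ref{theorem2} for the pair $(T^n,T^n)$.
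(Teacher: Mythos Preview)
Your approach is correct and matches the paper's: the paper gives no explicit proof for this corollary, but positions it as the Theorem~\ref{theorem2} analogue of the earlier iterate corollary (Corollary~2.16), which the paper obtains simply by ``tak[ing] $S=T$ and $n=m$'' in the families theorem --- exactly your specialization to $S_1=\cdots=S_n=T_1=\cdots=T_n=T$. You in fact go further than the paper by spelling out the iterate-lifting $T^nu=u\Rightarrow Tu=u$ and by flagging the admissibility/seed bridges and the reading of $M$ with $T^n$, all of which the paper leaves implicit.
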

\begin{theorem}\label{theorem3}
Let $(X,d)$ be a complete complex valued metric space, $\xi \in \mathscr{Z}_{\mathbb{C}}$ and $S,T:X \rightarrow X$ be a pair of self-mappings such that for all $x,y \in X$, the following conditions hold: 

(i) $0\precsim \alpha(x,y)d(Sx,Ty)$,

(ii) $0\precsim \xi(\alpha(x,y)d(Sx,Ty), N(x,y))$, 

where $N(x,y)= \max \Big\{\vert d(x,y)\vert,\frac{\vert d(x,Sx)\vert \vert d(y,Ty)\vert+\vert d(x,Ty)\vert \vert d(y,Sx)\vert}{1+\vert d(x,y)\vert}\Big\}$,

(iii) $0\precsim \xi(\vert \alpha(x,y)d(Sx,Ty)\vert, N(x,y))$,

(iv) $(S,T)$ is triangular $\alpha_{\mathbb{C}}$-orbital admissible,

(v) there exists $x_0 \in X$ such that $1 \precsim \alpha(x_0,Sx_0)$ and $1\precsim \alpha(Sx_0,x_0)$,

(vi) $X$ is $\alpha_{\mathbb{C}}$-regular.\\
Moreover, if for $x,y \in$ Fix$(T)$ $\cap$ Fix$(S)$ we have $1\precsim \alpha(x,y)$. Then $S$ and $T$ have a unique common fixed point in $X$.
\end{theorem}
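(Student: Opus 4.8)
The plan is to follow the same Picard-iteration template used in the proofs of Theorem \ref{theorem1} and Theorem \ref{theorem2}, the only genuinely new work being the bookkeeping forced by the rational expression $N(x,y)$. First I would fix $x_0$ as in hypothesis (v) and build the sequence $x_{2n+1}=Sx_{2n}$, $x_{2n+2}=Tx_{2n+1}$; triangular $\alpha_{\mathbb{C}}$-orbital admissibility together with (v) propagates $1\precsim\alpha(x_n,x_{n+1})$ and $1\precsim\alpha(x_{n+1},x_n)$ for all $n$, exactly as before. The key preliminary observation is that on consecutive iterates the cross term in $N$ collapses: since $d(x_{2n+1},Sx_{2n})=d(x_{2n+1},x_{2n+1})=0$, one gets $N(x_{2n},x_{2n+1})=\max\{|d(x_{2n},x_{2n+1})|,\,\frac{|d(x_{2n},x_{2n+1})|\,|d(x_{2n+1},x_{2n+2})|}{1+|d(x_{2n},x_{2n+1})|}\}$. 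Because $\frac{|d(x_{2n},x_{2n+1})|}{1+|d(x_{2n},x_{2n+1})|}<1$, the second entry is strictly smaller than $|d(x_{2n+1},x_{2n+2})|$, so if it realized the maximum then condition (ii) (via property (ii) of $\xi$, condition (i), and the magnitude rule $0\precsim z_1\precnsim z_2\Rightarrow|z_1|<|z_2|$) would give $|d(x_{2n+1},x_{2n+2})|<N(x_{2n},x_{2n+1})<|d(x_{2n+1},x_{2n+2})|$, absurd. Hence $N(x_{2n},x_{2n+1})=|d(x_{2n},x_{2n+1})|$ and $|d(x_{2n+1},x_{2n+2})|<|d(x_{2n},x_{2n+1})|$; the odd step is symmetric (now the cross term $d(x_{2n+2},Tx_{2n+1})$ vanishes), so $\{|d(x_n,x_{n+1})|\}$ decreases to some $L\geq0$.

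To force $L=0$ I would assume $L>0$, note $N(x_{2n},x_{2n+1})\to L$ and that $|\alpha(x_{2n},x_{2n+1})d(x_{2n+1},x_{2n+2})|\to L$ (squeezed between $|d(x_{2n+1},x_{2n+2})|$ and $N$), and apply property (iii) of Definition \ref{definition2.1} to these two magnitude sequences together with condition (iii) to obtain $0\precsim\limsup\xi\precnsim0$, a contradiction. This yields $\lim_n|d(x_n,x_{n+1})|=0$.

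The Cauchy step is where $N$ does real work, and I expect it to be the main obstacle. Assuming $\{x_{2n}\}$ is not Cauchy, I would extract, exactly as in Theorem \ref{theorem1}, subsequences with $\lim_i|d(x_{2m_i},x_{2n_i})|=\lim_i|d(x_{2m_i},x_{2n_i-1})|=\lim_i|d(x_{2m_i+1},x_{2n_i})|=|c|$. The new ingredient is evaluating $\lim_i N(x_{2m_i},x_{2n_i-1})$: the products $|d(x_{2m_i},x_{2m_i+1})|$ and $|d(x_{2n_i-1},x_{2n_i})|$ vanish, so the numerator of the rational term reduces to $|d(x_{2m_i},x_{2n_i})|\,|d(x_{2n_i-1},x_{2m_i+1})|$, and a two-sided triangle-inequality estimate (using that consecutive distances tend to $0$) gives $\lim_i|d(x_{2n_i-1},x_{2m_i+1})|=|c|$. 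Thus the rational term tends to $\frac{|c|^2}{1+|c|}<|c|$, whence $\lim_i N(x_{2m_i},x_{2n_i-1})=|c|$. Condition (ii) then squeezes $|\alpha(x_{2m_i},x_{2n_i-1})d(x_{2m_i+1},x_{2n_i})|$ to $|c|>0$, and property (iii) applied against $N(x_{2m_i},x_{2n_i-1})$ through condition (iii) produces $0\precsim\cdots\precnsim0$, the desired contradiction. Hence $\{x_{2n}\}$, and so $\{x_n\}$, is Cauchy and converges to some $u\in X$ by completeness.

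Finally, for existence I would invoke $\alpha_{\mathbb{C}}$-regularity to pass to a subsequence with $1\precsim\alpha(u,x_{2n_k+1})$ and $1\precsim\alpha(x_{2n_k},u)$. Writing $d(Su,u)\precsim d(Su,Tx_{2n_k+1})+d(Tx_{2n_k+1},u)$ and computing $N(u,x_{2n_k+1})$, each surviving product carries a factor tending to $0$ (either $|d(x_{2n_k+1},x_{2n_k+2})|\to0$ or $|d(u,x_{2n_k+2})|\to0$), so $N(u,x_{2n_k+1})\to0$ and condition (ii) forces $|d(Su,u)|\to0$, i.e. $Su=u$; the symmetric computation with $N(x_{2n_k},u)$ gives $Tu=u$, so, in contrast to Theorem \ref{theorem2}, no case analysis is needed here. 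For uniqueness, if $v$ is another common fixed point then $d(Su,Tv)=d(u,v)$ and $N(u,v)=\max\{|d(u,v)|,\frac{|d(u,v)|^2}{1+|d(u,v)|}\}=|d(u,v)|$; using the hypothesis $1\precsim\alpha(u,v)$ on $\mathrm{Fix}(S)\cap\mathrm{Fix}(T)$, condition (ii) yields $|d(u,v)|\leq|\alpha(u,v)d(u,v)|<N(u,v)=|d(u,v)|$, forcing $u=v$.
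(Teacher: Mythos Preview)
Your proposal is correct and follows essentially the same Picard-iteration template as the paper's own proof of Theorem~\ref{theorem3}: the reduction of $N(x_{2n},x_{2n+1})$ to $|d(x_{2n},x_{2n+1})|$, the $L=0$ step, the Cauchy contradiction via $\lim_i N(x_{2m_i},x_{2n_i-1})=|c|$ and property~(iii) of $\xi$, the existence via $\alpha_{\mathbb C}$-regularity, and the uniqueness all match the paper line by line. Your treatment is in fact marginally cleaner in two places---you compute $\lim_i N(x_{2m_i},x_{2n_i-1})$ by evaluating each branch directly (the paper instead sandwiches the rational branch), and in the existence step you note at once that $N(u,x_{2n_k+1})\to 0$, bypassing the paper's Case~1/Case~2 split---but these are stylistic, not substantive, differences.
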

\begin{proof}
Following the lines of the proof of Theorem \ref{theorem1} we construct a sequence $\{x_n\}$ in $X$ satisfying $\vert d(x_{2n+1},x_{2n+2}) \vert <  N(x_{2n},x_{2n+1})$, where
$$N(x_{2n},x_{2n+1})=\max \Big\{\vert d(x_{2n},x_{2n+1})\vert, \frac{\vert d(x_{2n},x_{2n+1})\vert \vert d(x_{2n+1},x_{2n+2})\vert}{1+\vert d(x_{2n},x_{2n+1})\vert}\Big\}.$$
If $N(x_{2n},x_{2n+1})=\frac{\vert d(x_{2n},x_{2n+1})\vert \vert d(x_{2n+1},x_{2n+2})\vert}{1+\vert d(x_{2n},x_{2n+1})\vert}$, then $$\vert d(x_{2n+1},x_{2n+2}) \vert < \frac{\vert d(x_{2n},x_{2n+1})\vert \vert d(x_{2n+1},x_{2n+2})\vert}{\vert 1+d(x_{2n},x_{2n+1})\vert} < \vert d(x_{2n+1},x_{2n+2})\vert,$$ a contradiction. This gives $\vert d(x_{2n+1},x_{2n+2}) \vert < \vert d(x_{2n},x_{2n+1}) \vert$. Similarly, $\vert d(x_{2n+3},x_{2n+2}) \vert < \vert d(x_{2n+2},\\x_{2n+1}) \vert$. Then $\{\vert d(x_n,x_{n+1}) \vert \}$ is a monotonically non-increasing sequence of non-negative real numbers therefore, it is convergent. It is easily seen that $\lim\limits_{n \rightarrow \infty} \vert d(x_n,x_{n+1}) \vert =0$. It suffices to prove that $\{x_{2n}\}$ is a Cauchy sequence in $(X,d)$. On the contrary, assume that $\{x_{2n}\}$ is not a Cauchy sequence in $(X,d)$. Then there exists $c \in \mathbb{C}$ with $0\prec c$ for which we can find two subsequences $\{x_{2m_i}\}$ and $\{x_{2n_i}\}$ of $\{x_{2n}\}$ such that $n_i$ is the smallest positive integer with 
$$2n_i > 2m_i \geq i \quad \mbox{and} \quad c \precsim d(x_{2m_i},x_{2n_i}) .$$
This means that $ d(x_{2m_i},x_{2n_i-2})\prec c$. Following the lines in the proof of Theorem \ref{theorem1} we get, $\lim\limits_{i \rightarrow \infty} \vert  d(x_{2m_i},x_{2n_i}) \vert =\lim\limits_{i \rightarrow \infty} \vert d(x_{2m_i},x_{2n_i-1}) \vert=\lim\limits_{i \rightarrow \infty} \vert d(x_{2m_i+1},\\x_{2n_i}) \vert= \vert c \vert$. Now 
\begin{align*}
N(x_{2m_i},x_{2n_i-1})&=\max \Big\{\vert d(x_{2m_i},x_{2n_i-1})\vert,\frac{1}{1+\vert d(x_{2m_i},x_{2n_i-1})\vert}\{\vert d(x_{2m_i},x_{2m_i+1})\vert \\
& \quad  \vert d(x_{2n_i-1},x_{2n_i})\vert+\vert d(x_{2m_i},x_{2n_i})\vert \vert d(x_{2n_i-1},x_{2m_i+1})\vert\}  \Big\}.
\end{align*} If $N(x_{2m_i},x_{2n_i-1})=\frac{\vert d(x_{2m_i},x_{2m_i+1})\vert \vert d(x_{2n_i-1},x_{2n_i})\vert+\vert d(x_{2m_i},x_{2n_i})\vert \vert d(x_{2n_i-1},x_{2m_i+1})\vert}{1+\vert d(x_{2m_i},x_{2n_i-1})\vert}$, then
\begin{align*}
 N(x_{2m_i},x_{2n_i-1})& \leq \frac{1}{\vert 1+ d(x_{2m_i},x_{2n_i-1})\vert}\Big[\vert d(x_{2m_i},x_{2m_i+1})\vert \vert d(x_{2n_i-1},x_{2n_i})\vert\\
& \quad +\{\vert d(x_{2m_i},x_{2n_i-1})\vert + \vert d(x_{2n_i-1},x_{2n_i})\vert \}\vert d(x_{2n_i-1},x_{2m_i+1})\vert \Big]\\
& < \frac{\vert d(x_{2m_i},x_{2m_i+1})\vert \vert d(x_{2n_i-1},x_{2n_i})\vert}{\vert 1+ d(x_{2m_i},x_{2n_i-1})\vert}+  \vert d(x_{2n_i-1},x_{2m_i+1}) \vert \\
& \quad + \frac{\vert d(x_{2n_i-1},x_{2n_i})\vert \vert d(x_{2n_i-1},x_{2m_i+1})\vert}{\vert 1+ d(x_{2m_i},x_{2n_i-1})\vert}\\
& \leq \frac{\vert d(x_{2m_i},x_{2m_i+1})\vert \vert d(x_{2n_i-1},x_{2n_i})\vert}{\vert 1+ d(x_{2m_i},x_{2n_i-1})\vert}+\vert d(x_{2n_i-1},x_{2n_i}) \vert\\
&\quad  +  \vert d(x_{2n_i},x_{2m_i+1}) \vert+\frac{\vert d(x_{2n_i-1},x_{2n_i})\vert \vert d(x_{2n_i-1},x_{2m_i+1})\vert}{\vert 1+ d(x_{2m_i},x_{2n_i-1})\vert}.
\end{align*}
Letting $i \rightarrow \infty$ we get, $\lim\limits_{i \rightarrow \infty} N(x_{2m_i},x_{2n_i-1})\leq  \vert c \vert$. Also, $\vert d(x_{2m_i},x_{2n_i-1})\vert \leq N(x_{2m_i},x_{2n_i-1})$. Therefore, $\lim\limits_{i \rightarrow \infty} N(x_{2m_i},x_{2n_i-1})= \vert c \vert$. Since 
\begin{align*}
0 & \precsim \xi(\alpha(x_{2m_i},x_{2n_i-1})d(x_{2m_i+1},x_{2n_i}),N(x_{2m_i},x_{2n_i-1}))\\
&\precnsim N(x_{2m_i},x_{2n_i-1})-\alpha(x_{2m_i},x_{2n_i})d(x_{2m_i+1},x_{2n_i})
\end{align*} which gives $\lim\limits_{n \rightarrow \infty} \vert \alpha(x_{2m_i},x_{2n_i-1})d(x_{2m_i+1},x_{2n_i})\vert =\vert c \vert$. So, $$0\precsim \xi(\vert \alpha(x_{2m_i},x_{2n_i-1})d(x_{2m_i+1},x_{2n_i})\vert , N(x_{2m_i},x_{2n_i-1}))\precnsim 0,$$ a contradiction. Thus, $\{x_{2n}\}$ is a Cauchy sequence in $(X,d)$. Then there exists $u \in X$ such that $\lim\limits_{n \rightarrow \infty} \vert d(x_n,u) \vert=0$. Consider $d(Su,u) \precsim d(Su,Tx_{2n_k+1})+d(Tx_{2n_k+1},u)$. As $0\precsim \xi(\alpha(u,x_{2n_k+1})\\d(Su,Tx_{2n_k+1}),N(u,x_{2n_k+1}))$ which implies that $\vert d(Su,Tx_{2n_k+1})\vert < N(u,x_{2n_k+1})) $, where
 \begin{align*}
 N(u,x_{2n_k+1})&=\max \Big\{\vert d(u,x_{2n_k+1})\vert, \frac{\vert d(u,Su)\vert \vert d(x_{2n_k+1},x_{2n_k+2})\vert+\vert d(u,x_{2n_k+2})\vert \vert d(x_{2n_k+1},Su)\vert}{1+\vert d(u,x_{2n_k+1})\vert}\Big\}.
 \end{align*} 
 \textbf{Case-1} If $N(u,x_{2n_k+1})= \vert d(u,x_{2n_k+1})\vert$, then $\vert d(Su,u)\vert < \vert d(u,x_{2n_k+1})\vert+\vert d(x_{2n_k+2},u)\vert$. Letting $k \rightarrow \infty$ we get, $Su=u$.\\
 \textbf{Case-2} If $N(u,x_{2n_k+1})= \frac{\vert d(u,Su)\vert \vert d(x_{2n_k+1},x_{2n_k+2})\vert+\vert d(u,x_{2n_k+2})\vert \vert d(x_{2n_k+1},Su)\vert}{1+\vert d(u,x_{2n_k+1})\vert}$, then 
 \begin{align*}
 \vert d(Su,u)\vert & < \frac{\vert d(u,Su)\vert \vert d(x_{2n_k+1},x_{2n_k+2})\vert +\vert d(u,x_{2n_k+2})\vert \vert d(x_{2n_k+1},Su)\vert}{\vert 1+d(u,x_{2n_k+1})\vert} +\vert d(x_{2n_k+2},u)\vert.
 \end{align*} Letting $k \rightarrow \infty$ we get, $Su=u$. Similarly, we can prove that $Tu=u$. Let $v$ be another common fixed point of $S$ and $T$. Consider $$0\precsim \xi(\alpha(u,v)d(Su,Tv),N(u,v)),$$ where $N(u,v)= \max \Big\{ \vert d(u,v)\vert,\frac{{\vert d(u,v)\vert}^2}{1+\vert d(u,v)\vert} \Big\}$. In both the cases we get, $\vert d(u,v) \vert  < \vert d(u,v) \vert$, a contradiction. Hence, $S$ and $T$ have a unique common fixed point in $X$. 
\end{proof}
\section{Application to Differential Equations}
In this section, inspired by Natashi and Vetro \cite{6} and Harjani and Sadarangani \cite{3} we establish the existence of a solution of a differential equation as an application of our result.

Let $X=C([0,a], \mathbb{R}^n)$ be the space of continuous functions $u: [0,a] \rightarrow \mathbb{R}^n$. Let $\Vert (u_1,u_2,\ldots,u_n )\Vert= \max \{\vert u_1 \vert, \vert u_2 \vert, \ldots , \vert u_n \vert \}$ and define $d:X \times X \rightarrow \mathbb{C}$ as $$d(u,v)=\max\limits_{t \in [0,a]} \Vert u(t)-v(t) \Vert \sqrt{1+a^2} e^{i \taninv a}$$ for all $u, v \in X$. Then $(X,d)$ is a complete complex valued metric space. Consider the first-order periodic problem
\begin{equation}\label{equation3.1}
\begin{split}
u'(t)&=f(t,u(t)),\quad  t \in [0,a],\\
u(0)&=u(a),
\end{split}
\end{equation} 
where $f:[0,a] \times \mathbb{R}^n \rightarrow \mathbb{R}^n$ is a continuous function. This is equivalent to
\begin{equation}\label{equation3.2}
\begin{split}
u'(t)+ \eta u(t)&=f(t,u(t))+\eta u(t),\quad  t \in [0,a], \eta >1 \thinspace \thinspace \mbox{is a real number}\\\
u(0)&=u(a), 
\end{split}
\end{equation}
 Then equation (\ref{equation3.2}) is equivalent to the following integral equation: 
 $$u(t)= \int_0^a H(t,s) [f(s,u(s))+\eta u(s)]ds,$$ where 
 $$H(t,s)=\left\{\begin{array}{cc}
\frac{ e^{\eta(a+s-t)}}{e^{\eta a}-1},& \mbox{if}\thinspace \thinspace 0 \leq s \leq t \leq a,\\
\frac{ e^{\eta(s-t)}}{e^{\eta a}-1}, &\mbox{if} \thinspace \thinspace 0 \leq t \leq s \leq a.
\end{array}
\right. $$ 
Note that $\int_0^a H(t,s)ds = \frac{1}{\eta}$. 
\begin{theorem}\label{theorem4}
Consider  the first-order periodic problem (\ref{equation3.1}) with $f:[0,a] \times \mathbb{R}^n \rightarrow \mathbb{R}^n$  a continuous function and suppose that there exists $\eta >1$ such that $$\Vert f(t,u)+\eta u -f(t,v)- \eta v\Vert  \leq \Vert u-v \Vert $$ for all $u,v \in X$. Then equation (\ref{equation3.1}) has a unique solution.
\end{theorem}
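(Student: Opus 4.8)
The plan is to recast the periodic problem (\ref{equation3.1}) as a fixed point problem for the associated integral operator and then apply the Banach contraction principle in the complex valued setting, Corollary \ref{corollary1}. First I would define $T:X \rightarrow X$ by
$$(Tu)(t)= \int_0^a H(t,s)\big[f(s,u(s))+\eta u(s)\big]\,ds, \qquad t \in [0,a],$$
which is well defined and sends $X$ into $X$ since $f$ and $H$ are continuous. By the equivalence recorded above between (\ref{equation3.1}) and the integral equation, a function $u \in X$ solves (\ref{equation3.1}) if and only if $u$ is a fixed point of $T$. Hence it suffices to produce a unique fixed point of $T$.

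The heart of the argument is a contraction estimate. For $u,v \in X$ and $t \in [0,a]$, observing that $H(t,s)\geq 0$ (here $\eta>1$ guarantees $e^{\eta a}-1>0$, and the exponentials are positive), I would estimate pointwise
\begin{align*}
\Vert (Tu)(t)-(Tv)(t) \Vert &\leq \int_0^a H(t,s)\,\Vert f(s,u(s))+\eta u(s)-f(s,v(s))-\eta v(s) \Vert\,ds\\
&\leq \int_0^a H(t,s)\,\Vert u(s)-v(s) \Vert\,ds \leq \Big(\max_{s \in [0,a]}\Vert u(s)-v(s)\Vert\Big)\int_0^a H(t,s)\,ds,
\end{align*}
where the second inequality uses the Lipschitz-type hypothesis on $f$. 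Invoking $\int_0^a H(t,s)\,ds=\tfrac{1}{\eta}$ and taking the maximum over $t$ gives $\max_{t}\Vert (Tu)(t)-(Tv)(t)\Vert \leq \tfrac{1}{\eta}\max_{s}\Vert u(s)-v(s)\Vert$.

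Finally I would translate this real inequality into the partial order $\precsim$. Multiplying both sides by the fixed factor $\sqrt{1+a^2}\,e^{i\taninv a}=1+ia$, which lies in $\mathbb{S}$, and using that scaling by a nonnegative real respects $\precsim$, I obtain $d(Tu,Tv)\precsim \tfrac{1}{\eta}\,d(u,v)$. Since $\eta>1$, the constant $\lambda=\tfrac{1}{\eta}$ satisfies $0<\lambda<1$, so $T$ is a contraction on the complete complex valued metric space $(X,d)$, and Corollary \ref{corollary1} furnishes a unique fixed point of $T$, i.e.\ a unique solution of (\ref{equation3.1}). The only delicate point is this last step: the contraction constant must multiply the entire complex modulus factor, and one must confirm that $1+ia \in \mathbb{S}$ so that multiplication by a nonnegative real preserves $\precsim$; everything else is the standard Green's function estimate, in which the Lipschitz constant $1$ collapses, after averaging against $H$, to $1/\eta<1$.
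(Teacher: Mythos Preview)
Your proposal is correct and follows essentially the same route as the paper: define the integral operator $T$, use positivity of $H$ together with the hypothesis and $\int_0^a H(t,s)\,ds=\tfrac{1}{\eta}$ to obtain $d(Tu,Tv)\precsim \tfrac{1}{\eta}d(u,v)$, and conclude via Corollary~\ref{corollary1}. Your write-up is in fact slightly more careful than the paper's in justifying the passage to the partial order (identifying $\sqrt{1+a^2}\,e^{i\taninv a}=1+ia\in\mathbb{S}$), but the argument is the same.
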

\begin{proof}
Define $T: X \rightarrow X$ as 
$$(Tu)(t)= \int_0^a H(t,s) [f(s,u(s))+\eta u(s)]ds.$$ Observe that $u \in X$ is a fixed point of $T$ if and only if $u$ is a solution of (\ref{equation3.1}). Since $G(t,s)>0$ for $t,s \in [0,a]$, for every $u,v \in X$ we have
\begin{align*}
d(Tu,Tv) & = \max\limits_{t \in [0,a]} \Vert (Tu)(t)-(Tv)(t) \Vert \sqrt{1+a^2} e^{i \taninv a} \\
 &= \max\limits_{t \in [0,a]} \Big\Vert \int_0^a H(t,s)[ f(s,u(s))+ \eta u(s) -f(s,v(s))- \eta v(s)]ds \Big\Vert \\
 & \quad \sqrt{1+a^2} e^{i \taninv a}\\
 & \precsim  \max\limits_{t \in [0,a]} \int_0^a H(t,s)\Vert  u(s)- v(s)\Vert \sqrt{1+a^2}  e^{i \taninv a}ds\\
 & \precsim \frac{1}{\eta} d(u,v).
\end{align*}
Therefore, all the conditions of Corollary \ref{corollary1} are satisfied. Hence, $T$ has a unique fixed point in $X$
\end{proof}
\begin{example}
\emph{Consider the first-order periodic problem
\begin{equation}\label{equation3.3}
\begin{split}
u'(t)&=f(t,u(t)),\quad  t \in [0,1],\\
u(0)&=u(1),
\end{split}
\end{equation}
where $f:[0,1] \times \mathbb{R}^n \rightarrow \mathbb{R}^n$ is defined as $f(t,u)=(t-u_1,t-u_2,\ldots , t-u_n)$ for all $t \in [0,1]$ and $u=(u_1,u_2,\ldots,u_n)$. For $1< \eta <2$ , $f(t,u)+\eta u-f(t,v)- \eta v= ((\eta-1)(u_1-v_1),(\eta-1)(u_2-v_2),\ldots, (\eta-1)(u_n-v_n))$. Then $\Vert f(t,u)+\eta u-f(t,v)- \eta v \Vert \leq \Vert u-v \Vert$. Using Theorem \ref{theorem4}, equation (\ref{equation3.3}) has a unique solution.} 
\end{example}
\begin{example}
\emph{Consider the first-order periodic problem
\begin{equation}\label{equation3.4}
\begin{split}
u'(t)&=f(t,u(t)),\quad  t \in [0,2],\\
u(0)&=u(1),
\end{split}
\end{equation}
where $f:[0,2] \times \mathbb{R}^n \rightarrow \mathbb{R}^n$ is defined as $f(t,u)=-\ln(10+t^2)(u_1,u_2,\ldots , u_n)$ for all $t \in [0,2]$ and $u=(u_1,u_2,\ldots,u_n)$. For $-1+\ln 10  < \eta <1+ \ln 14$ , $f(t,u)+\eta u-f(t,v)- \eta v= (\eta-\ln(10+t^2))((u_1-v_1),(u_2-v_2),\ldots, (u_n-v_n))$. Then $\Vert f(t,u)+\eta u-f(t,v)- \eta v \Vert \leq \Vert u-v \Vert$. Using Theorem \ref{theorem4}, equation (\ref{equation3.4}) has a unique solution.}
\end{example}
\section*{Acknowledgements}
 The corresponding author(Manu Rohilla) is supported by UGC Non-NET fellowship (Ref.No. Sch/139/Non-NET/Math./Ph.D./2017-18/1028).

\end{document}